\newlength{\defbaselineskip}
\newcommand{\setlinespacing}[1]%
{\setlength{\baselineskip}{#1 \defbaselineskip}}
\theoremstyle{plain}
\newtheorem{theorem}{Theorem}[section]
\newtheorem{lemma}[theorem]{Lemma}
\newtheorem{proposition}[theorem]{Proposition}
\theoremstyle{definition}
\newtheorem{ass}[theorem]{Assumption}
\newtheorem{remark}[theorem]{Remark}
\numberwithin{equation}{section}
\DeclareMathOperator*{\essinf}{ess\,inf}
\newcommand{\cF}{\mathcal{F}}
\newcommand{\bE}{\mathbb{E}}
\newcommand{\bP}{\mathbb{P}}
\newcommand{\bR}{\mathbb{R}}
\begin{document}
	
\title{Portfolio Liquidation Games with Self-Exciting Order Flow }
	\author{Guanxing Fu\thanks{The Hong Kong Polytechnic University, Department of Applied Mathematics, Hung Hom, Kowloon, Hong Kong} \quad \quad Ulrich Horst\thanks{Humboldt University Berlin, Department of Mathematics and School of Business and Economics, Unter den Linden 6, 10099 Berlin}\quad \quad Xiaonyu Xia\thanks{Humboldt University Berlin, Department of Mathematics, Unter den Linden 6, 10099 Berlin}}
	
\maketitle

\begin{abstract}
We analyze novel portfolio liquidation games with self-exciting order flow. Both the $N$-player game and the mean-field game are considered. We assume that players' trading activities have an impact on the dynamics of future market order arrivals thereby generating an additional transient price impact. Given the strategies of her competitors each player solves a mean-field control problem. We characterize open-loop Nash equilibria in both games in terms of a novel mean-field FBSDE system with unknown terminal condition. Under a weak interaction condition we prove that the FBSDE systems have unique solutions. Using a novel sufficient maximum principle that does not require convexity of the cost function we finally prove that the solution of the FBSDE systems do indeed provide existence and uniqueness of open-loop Nash equilibria.     
\end{abstract}

{\bf AMS Subject Classification:} 93E20, 91B70, 60H30.

{\bf Keywords:}{~stochastic games, mean-field games, portfolio liquidation, singular terminal value}


\section{Introduction}

	Models of optimal portfolio liquidation under market impact have received substantial consideration in the financial mathematics and the stochastic control literature in recent years. Starting with the work of Almgren and Chriss~\cite{AC-2001} existence and uniqueness of optimal liquidation strategies under various forms of market impact, trading restrictions and model uncertainty have been established by many authors including \cite{AJK-2014, bank:voss:16, FruthSchoenebornUrusov14, GatheralSchied11, GHQ-2015, GHS-2013, HXZ-2020, Kratz14, KP-2016, PZ-2018}. 
	
	One of the main characteristics of portfolio liquidation models is the terminal state constraint on the portfolio process. The constraint translates into a singular terminal condition on the associated HJB equation or an unknown terminal condition on the associated adjoint equation when applying stochastic maximum principles. In deterministic settings the state constraint is typically no challenge. In stochastic settings, however, it causes significant difficulties when proving the existence of solutions to the HJB or adjoint equation and hence in proving the existence and uniqueness of optimal trading strategies.
	
	The majority of the optimal trade execution literature allows for either instantaneous or transient impact. The first approach, initiated by Bertsimas and Lo in \cite{BL-1998} and Almgren and Chriss in \cite{AC-2001}, describes the price impact as a purely temporary effect that depends only on the present trading rate and does not influence future prices. A second approach, initiated by Obizhaeva and Wang in \cite{OW-2013}, assumes that the price impact is transient with the impact of past trades on current prices decaying over time. For single player models Graewe and Horst in \cite{GH-2017} and Horst and Xia in \cite{HX1} combined instantaneous and transient impacts into a single model. Assuming that the transient price impact follows an ordinary differential equation with random coefficients driven by the large investor's trading rate they showed that the optimal execution strategies can be characterized in terms of the solutions to multi-dimensional backward stochastic differential equations with singular terminal condition.   
	
	This paper studies a game theoretic extension of the liquidation model analyzed in \cite{GH-2017, HX1}. Our key conceptual contribution is to allow for an additional feedback of the large investors' trading activities on future market dynamics. There are many reasons why large selling orders may have an impact on future price dynamics. Extensive selling (or buying) may, for instance diminish the pool of counterparties and/or generate herding effects where other market participants start selling (or buying) in anticipation of further price decreases (or increases). Extensive selling may also attract predatory traders that employ front-running strategies. We refer Brunnermeier and Pedersen \cite{Brunnermeier-2005}, Carlin et al \cite{Carlin2007} and Schied and Sch\"oneborn \cite{SS-2009} for an in-depth analysis of predatory trading. 
	
	Specifically, we assume that the market buy and sell order dynamics follow Hawkes processes whose base intensities depend on the large investors' trading activities. Hawkes processes have recently received considerable attention in the financial mathematics literature as a powerful tool to model self-exciting order flow and its impact on stock price volatility; see \cite{BacryDelattreHoffmannMuzy2013,BacryMastromatteoMuzy2015,ElEuchRosenbaum2019a,JaissonRosenbaum2015,HXu-2020} and references therein. In the context of liquidation models, they have been employed in \cite{Alfonsi-2016,Papanicolaou-2019,Cartea-2018} albeit in very different settings. Alfonsi and Blanc in \cite{Alfonsi-2016} considered a variant of Obizhaeva-Wang model in \cite{OW-2013}, in which the continuous martingale driving the benchmark price in \cite{OW-2013} was replaced by a given point process involving mutually exciting Hawkes processes. Amaral and Papanicolaou in \cite{Papanicolaou-2019} modeled the benchmark price by the difference of two mutually exciting processes. Cartea et al in \cite{Cartea-2018} considered a liquidation model in which the investor placed limit orders whose fill rates depended on a mutually exciting ``influential'' market order flow.  In all three models the intensities of the Hawkes processes were {\sl exogenous}; in our model they are endogenously controlled by the large investors. Cay\'e and Muhle-Karbe in \cite{Caye-2016} allowed for some form of endogenous feedback of past trades on future transaction costs but did not model this using Hawkes processes. All the aforementioned papers considered single-player models while our focus is on liquidation games.
	
	We use Hawkes processes to introduce an additional transient price impact, which leads to a mean field control problem for each player. Finite player games with deterministic model parameters and transient impact were studied by Schied and coauthors in \cite{Schied-2020, Schied-2017b,Schied-2019} and by Strehle in \cite{Strehle-2018}. We allow all impact parameters and cost coefficients to be stochastic. Liquidation games with instantaneous and permanent impact and with and without strict liquidation constraint have been studied in \cite{Carlin2007,Drapeau2019,ET-2020, FH-2020, Voss-2019}. Although our mathematical framework would clearly be flexible enough to allow for an additional permanent impact we deliberately choose not to include a permanent impact as it does not alter the mathematical analysis. Instead, we choose to clarify the effects of self-exciting order flow on equilibrium liquidation strategies in a setting with only transient and instantaneous impact.  

	We consider both the finite player and the corresponding mean-field liquidation game. Mean-field games (MFGs) of optimal liquidation without strict liquidation constraint have been studied in \cite{CL-2018,C-Jai-2018,C-Jai-2018b,HJN-2015}.  Among them, Cardaliaguet and Lehalle in \cite{CL-2015} considered an MFG where each player has a different risk aversion. Casgrain and Jaimungal in \cite{C-Jai-2018,C-Jai-2018b} considered liquidation games with partial information and different beliefs, respectively. To the best of our knowledge mean-field and mean-field type games with liquidation constraint have only been analyzed by Fu et al in \cite{FGHP-2018} and Fu and Horst in \cite{FH-2020} as well as in the recent work by Evangelista and  Thamsten \cite{ET-2020}.    
	
	Our model is very different from \cite{ET-2020, FGHP-2018, FH-2020}, both conceptually and mathematically. First, with our choice of feedback effect, each player's best response function is given by the solution to a mean-field rather than a standard control problem. Second, the fact that current trading rates have an impact on future order arrivals leads to a different and much richer equilibrium dynamics. Anticipating their impact on future order arrivals, the players typically trade more aggressively initially and may alternately take long and short positions. Taking short positions in equilibrium is intuitive under our feedback mechanism. Benefiting from the inertia of market order flow the cost of over-selling may well be outweighed by the resulting additional sell order flow when closing short positions at later points in time. 
	
	It has been observed in \cite{FGHP-2018, FH-2020} before that overselling may occur in equilibrium. In these models these were pure liquidity provision effects, though, where players with initially smaller long positions and hence lesser impact cost provide liquidity to players with initially larger positions. In our model the players benefit from their impact on future order flow and oscillating trading strategies may be observed even in the single player benchmark model. At this point it is important to emphasize that anticipating one's own impact on future order flow does not generate arbitrage opportunities in single player models. In multi-player models there may indeed exist beneficial round-trips as we show by means of an explicit example.    
	
	Strongly oscillating equilibrium strategies have been observed in \cite{Schied-2017b,Schied-2019}. Unlike in their model, oscillations in our model do not arise from ``a hot potato effect'' but rather from the players' anticipated impact on future order flow. More importantly, in our model strong oscillations require extremely {\sl large} choices of impact parameters whereas in \cite{Schied-2017b,Schied-2019} they occur for small enough impact factors. We believe that it is more natural for erratic fluctuations to occur under strong rather than weak interactions. 
		   	
	We apply a stochastic method to solve the liquidation games. The stochastic maximum principle suggests that the equilibrium trading strategies in both the $N$-player game and the MFG can be characterized in terms of the solutions to coupled mean-field FBSDE systems. The forward components describe the players' optimal portfolio processes and the expected child order flow; hence their initial and in the case of the portfolio processes also terminal conditions are known. The backward components are the adjoint processes; they describe the respective equilibrium trading rates. Due to the liquidation constraint some of the terminal values are unknown. 
	
	We analyze both FBSDE systems within a common mathematical framework. Making a standard affine ansatz the system with unknown terminal condition can be replaced by an FBSDE with known initial and terminal condition, yet singular driver. Proving the existence of a small time solution to this FBSDE is not hard. The challenge is to prove the existence of a global solution on the whole time interval. Extending the continuation method for singular FBSDEs established in \cite{FGHP-2018} to our higher-dimensional system we prove that the FBSDE system does indeed have a unique solution in a certain space under a weak interaction condition that limits the impact of an individual player on the payoff of other players. Weak interaction conditions have been extensively used in the game theory literature before; see, e.g.~\cite{H-2005} and references therein. Without some form of weak interaction uniqueness of equilibria usually cannot be expected. 
	
	Subsequently, we establish a novel verification argument from which we deduce that the solution to the FBSDE system does indeed give the desired Nash equilibrium. Our maximum principle does not require convexity of the cost function as it is usually the case; see e.g. \cite[Theorem 6.4.6]{Pham}. In fact, unlike in \cite{ET-2020, FGHP-2018, FH-2020}, in our model the players' optimization problems are not convex and hence standard verification arguments do not apply. Instead, we establish a novel maximum principle that strongly relies on the liquidation constraint. Our idea is to decompose trading costs into a sum of equilibrium plus round-trip costs and to show that round-trips are costly. Finally, we prove that under an additional homogeneity assumption on the players' cost function the sequence of Nash equilibria in the $N$-player game converges in a suitable sense to the unique equilibrium in the MFG as the number of players tends to infinity. This complements the analysis in \cite{FGHP-2018} where no such convergence result was established.  
	 
	The benchmark model where all model parameters are deterministic, except the initial portfolios, is much easier to analyze. In this case, the FBSDE system reduces to an ODE system. The systems for the MFG, the single player model {and the two-player model} can be solved explicitly. The explicit solution is used to illustrate the impact of anticipating one's own impact on future order flow by {three} specific examples. 
		 
	 The remainder of this paper is organized as follows. The liquidation game is introduced in Section \ref{sec:game}. Existence and uniqueness of equilibria in both the $N$-player game and the corresponding MFG is established in Section \ref{sec:existence}. Convergence of the $N$-player equilibria to the unique MFG equilibrium is shown in Section \ref{sec-convergence}.  Numerical simulations are provided in Section \ref{sec:numerics}.   
	 
\textbf{Notation.} We use the following notation and notational conventions. 
We denote by $\langle\cdot ,\cdot \rangle$ the inner product of two vectors. For a matrix $y\in\mathbb R^{n\times m}$, denote by $|y|:=\left(\sum_{1\leq i\leq n,1\leq j\leq m}|y_{ij}|^2\right)^{1/2}$ the $2$-norm of $y$. For a $\mathbb R$-valued essentially bounded stochastic process $y$, denote by $y_{\min}$ and by $\|y\|$ its lower bound and upper bound, respectively. For a $\mathbb R^{n\times m}$-valued essentially bounded stochastic process $y$, without confusion, we still denote by $\|y\|$ its upper bound in terms of $2$-norm, i.e., $\|y\|:=(\sum_{i,j}\|y_{ij}\|^2)^{1/2}$.
%

For a filtration $\mathscr F$ we denote by $L^2_{\mathscr F}$ the space of all $\mathscr F$ progressively measurable processes such that $\|y\|_{L^2}:=\left(\mathbb E\left[\int_0^T|y_t|^2\,dt\right]\right)^{\frac{1}{2}}<\infty$. We let $\mathbb S^2_{\mathscr F}$ be the space of all $\mathscr F$ progressively measurable processes with continuous trajectories such that $\|y\|_{\mathbb S^2}:=\left(\mathbb E\left[\sup_{0\leq t\leq T}|y_t|^2\right]\right)^{\frac{1}{2}}<\infty$ and denote by $\mathcal H_{a,\mathscr F}$ the subspace of $\mathbb S^2_{\mathscr F}$ such that $\|y\|_a:=\left(\mathbb E\left[\sup_{0\leq t\leq T}\left(\frac{|y_t|}{(T-t)^a}\right)^2\right]\right)^{\frac{1}{2}}<\infty$. Finally, $L^{2,-}_{\mathscr F}$ denotes the space of all $\mathscr F$ progressively measurable processes such that for each $\epsilon>0$ it holds that $\mathbb E\left[\int_0^{T-\epsilon}|y_t|^2\,dt\right]<\infty$, and $\mathbb S^{2,-}_{\mathscr F}$ denotes the space of all $\mathscr F$ progressively measurable processes with continuous trajectories such that $\|y\|_{\mathbb S^{2,-}}:=\left(\sup_{\epsilon\geq 0}\mathbb E\left[\sup_{0\leq t\leq T-\epsilon}|y_t|^2\right]\right)^{\frac{1}{2}}<\infty$. 

Throughout, $C$ denotes a generic constant that may vary from line to line.


\section{The liquidation game}\label{sec:game}

In this paper we introduce a novel portfolio liquidation game with self-exciting order flow. Both the $N$-player game and the corresponding MFG will be considered. Our starting point is the portfolio liquidation model with instantaneous and persistent price impact analyzed in  \cite{GH-2017}. We briefly review this model in the next subsection before extending it by adding an additional feedback term of mean-field type into the dynamics of the benchmark price process. We assume throughout that randomness is described by a multi-dimensional Brownian motion $W$, unless otherwise stated, defined on a filtered probability space $(\Omega,\cF, (\cF_t),\bP)$ that satisfies the usual conditions.


\subsection{The single player benchmark model}\label{1P}


In \cite{GH-2017} the authors analyzed a liquidation model in which the investor needs to unwind an initial portfolio of $x$ shares over a finite time horizon $[0,T]$ using  absolutely continuous trading strategies. Assuming a linear-quadratic cost function, the large investor's stochastic control problem is given by   
\begin{equation} \label{control-problem1}
	\essinf_{\xi \in L^2_{\mathcal F}(0,T;\mathbb R)} \mathbb E\left[ \int_0^T\{\eta\xi_s^2+\xi_sY_s+\lambda_sX_s^2\}\,ds \right]
\end{equation}
subject to the state dynamics
\begin{equation} \label{state-dynamics1} 
\left\{\begin{aligned}
	X_t&= x - \int_0^t \xi_s\,ds,  \quad t \in [0,T],\\
	X_T &= 0, \\
	Y_t&=\int_0^t\{-\rho_sY_s+\gamma\xi_s\}\,ds, \quad t \in [0,T].
\end{aligned}\right.
\end{equation}
Here, $\eta$ and $\gamma$ are positive constants while $\rho$ and $\lambda$ are progressively measurable, non-negative and essentially bounded stochastic processes. The quantity $X_t$ denotes the number of shares the investor needs to sell at time $t \in [0,T]$, while $\xi_t$ denotes the rate at which the stock is traded at that time. The process $Y$ describes the \textsl{persistent} price impact. It can be viewed as a shift in the mid quote price caused by past trades where the impact is measured by impact factor $\gamma$. Alternatively, it can be viewed as an additional spread caused by the large investor in a block-shaped limit order book market with constant order book depth $1/\gamma>0$ as in \cite{HN-2014,OW-2013}. This results in an execution price process of the form 
\begin{equation}\label{benchmark1}
	\tilde S_t = S_t - \eta \xi_t - Y_t
\end{equation}
where $S$ is a Brownian martingale that describes the dynamics of the unaffected mid-price process. The essentially bounded process $\rho$ describes the rates at which the order book recovers from past trades. The constant $\eta > 0$ describes an additional \textit{instantaneous} impact as in \cite{AC-2001, AJK-2014, GHQ-2015,GHS-2013} among many others. The first two terms of the running cost term in \eqref{control-problem1} capture the expected liquidity cost resulting from the instantaneous and the persistent impact, respectively. The third term can be interpreted as a measure of the market risk associated with an open position. It penalizes slow liquidation.  



We are now going to introduce an additional feedback effect into the above model that accounts for the possibility of an additional order flow (``child orders'') triggered by the large investor's trading activity. To this end, we assume that the market order dynamics follows a Hawkes process with exponential kernel. Specifically, we assume that market sell and buy orders arrive according to independent counting processes $N^\pm$ with respective intensities
\[
	\zeta^\pm_t := \mu_t + \alpha \int_0^t e^{-\beta(t-s)}dN^\pm_s 
\]
where $\mu$ is a common base intensity and $\alpha,\beta$ are deterministic  coefficients that capture the impact of past orders on future order flow. In the absence of the large trader the same number of sell and buy orders arrive on average. In the presence of the large trader the base intensities change to $\mu +\xi^\pm$ where $\xi^\pm$ denotes the positive/negative part of the large investor's liquidation strategy; if $\xi_t>0$ the investor is selling, else the investor is buying. Let $\bar Z^\pm$ denote the total number of sell/buy market orders. Standard computations show that the expected number $\bar Z_t=\bE[\bar Z^+_t- \bar Z^-_t]$ of net sell order arrivals is given by
\begin{equation}
	\bar Z_t = \int_0^t \bE[\xi_s]ds + \alpha \int_0^t e^{-\beta(t-s)} \bar Z_s ds. 
\end{equation}
In particular, the expected number of (net) sell child orders due to the large investor's trading activity equals 
\begin{equation}
	C_t = \alpha \int_0^t e^{-\beta(t-s)} \bar Z_s ds. 
\end{equation}
Differentiating this equation we see that the expected (net sell) child order flow follows the dynamics 
\begin{equation}
	dC_t = \left( -(\beta-\alpha)C_t + \alpha (x_0 -  \bE X_t ) \right) dt, \quad C_0=0. 
\end{equation}
The child order flow rate increases linearly in the investor's expected traded volume $x_0 -  \bE X$. The child order flow is mean-reverting if $\frac{\alpha}{\beta} < 1$; it is well known that the Hawkes process is stable in the long term if $\frac{\alpha}{\beta}<1$.  

Starting from \eqref{state-dynamics1} but accounting for the additional child order flow in the dynamics of the market impact process $Y$ results in the following mean-field type control problem for our large investor:
\begin{equation} \label{control-problem}
	\essinf_{\xi \in L^2_{\mathcal F}(0,T;\mathbb R)} \mathbb E\left[ \int_0^T \left \{\eta_s \xi_s^2+\xi_s Y_s + \lambda_sX_s^2 \right\}\,ds	\right]
\end{equation}
subject to the state dynamics 
\begin{equation} \label{state-dynamics2} 
\left\{\begin{aligned}
	dX_t&= - \xi_t\,dt,  \quad t \in [0,T],\\
	X_0&=x_0, ~ X_T = 0, \\
	dY_t&= \left(-\rho_t Y_t +\gamma_t (\xi_t -(\beta-\alpha)C_t + \alpha (x_0 - \bE X_t) \right) dt, \quad t \in [0,T], \\
	Y_0 & = 0, \\
	dC_t &=\left( -(\beta-\alpha)C + \alpha (x_0-  \bE X) \right) dt, \quad t \in [0,T], \\
	C_0 &=0. 
\end{aligned}\right.
\end{equation}


\subsection{Many player models}\label{NP}

Let us now consider a game theoretic extension of the above liquidation model with $N$ strategically interacting investors. The trading rate, initial portfolio and portfolio process of player $i \in \{1, ..., N\}$ are denoted $\xi^i$, $\mathcal X^i$ and $X^i$, respectively. The corresponding averages are denoted $\bar \xi$, $\bar{\mathcal X}$ and $\bar X$, respectively. We assume that the initial portfolios are (not necessarily independent) square-integrable random variables.

Assuming that both the child order flow and the impact process are driven by the average trading rate results in the following mean-field type optimization problem for player $i$ given the liquidation strategies $\xi^j$ $(j \neq i)$ of all the other players:  


\begin{equation} \label{control-problem3}
	\essinf_{\xi^i \in L^2_{\mathcal F}(0,T;\mathbb R)} \mathbb E\left[ \int_0^T\eta^i_t (\xi^i_s)^2+\xi^i_s Y^i_s + \lambda^i_s(X^i_s)^2\,ds	 \right]
\end{equation}
 subject to 
\begin{equation} \label{state-dynamics3} 
\left\{\begin{aligned}
	dX^i_t&= - \xi^i_t\,ds,  \quad t \in [0,T],\\
	X^i_0&=\mathcal X^i, ~ X^i_T = 0, \\
	dY^i_t&= \left(-\rho^i_t Y_t +\gamma^i_t (\bar \xi_t -(\beta^i_t-\alpha^i_t)C^i_t + \alpha^i_t (\mathbb E[\bar{\mathcal X}]- \bE[ \bar X_t]) \right) dt, \quad t \in [0,T] \\
	Y^i_0 & = 0 \\
	dC^i_t &=\left( -(\beta^i_t-\alpha^i_t)C^i_t + \alpha^i_t (\mathbb E[\bar{\mathcal X}]-  \bE[ \bar X_t]) \right) dt, \quad t \in [0,T] \\
	C^i_0 &=0. 
\end{aligned}\right.
\end{equation}

Under the assumption that all the cost coefficients and model parameters are essentially bounded, $\cF$-progressively measurable stochastic processes and that the instantaneous impact term and the risk aversion parameters are uniformly bounded away from zero we prove that the $N$-player liquidation game admits a unique Nash equilibrium under a weak interaction condition that limits the impact of an individual player on the trading costs of other players. Since each player affects the state dynamics of other players mainly through the impact parameters $\gamma^i$ our existence and uniqueness of equilibrium result requires these parameters to be small enough and/or the unaffected processes $\eta^i$ and $\lambda^i$ to be large enough. Moreover, we require the stability condition $\frac{\alpha^i}{\beta^i} < 1$ so that child order dynamics is mean-reverting. 

\begin{remark}\label{rem:unique}
Assuming that all players trade the same stock in the same venue is natural to assume that the model parameters and cost coefficients are the same across players, except to the initial portfolios and the risk aversion parameters. We are allowing for additional heterogeneity in the players cost functions and state dynamics as this does not alter the mathematical analysis.    
\end{remark}

Under the additional assumption that the player's cost functions are homogeneous in sense that 
\begin{equation} \label{coff-hom} 
	\begin{split}
	\eta^i_t =&~ \eta\left( t, {\cal X}^i, (W^i_s)_{0 \leq s \leq t} \right), \quad
	\lambda^i_t = \lambda\left( t, {\cal X}^i, (W^i_s)_{0 \leq s \leq t} \right), \quad 
	\rho^i_t = \rho\left( t, {\cal X}^i, (W^i_s)_{0 \leq s \leq t} \right),\\
	\alpha^i_t =&~ \alpha\left( t, {\cal X}^i, (W^i_s)_{0 \leq s \leq t} \right), \quad
	\beta^i_t = \beta\left( t, {\cal X}^i, (W^i_s)_{0 \leq s \leq t} \right), \quad 
	\gamma^i_t = \gamma\left( t, {\cal X}^i, (W^i_s)_{0 \leq s \leq t} \right),
	\end{split}
\end{equation}
for independent Brownian motions $W^1, W^2, ....$ and measurable function $\eta, \lambda, \rho, \alpha, \beta, \gamma$ and
\begin{equation}\label{coff-hom2}
	{\cal X}^1, {\cal X}^2, ... \mbox{ are i.i.d.~square integrable and independent of } W^1, W^2, ... 
\end{equation}
we also prove that the equilibrium converges (in a sense to be defined) to the unique equilibrium of a corresponding MFG as the number of players tends to infinity. 
%
%
%

The MFG is obtained by first replacing the average quantities $\bar \xi$ and $\bar X$ by deterministic processes $\mu$ and $\nu$, respectively and then by solving a representative player's optimization problem subject to an additional fixed point condition. In the MFG randomness is described by a Brownian motion $\overline W$ defined on some filtered probability space $(\Omega, \overline{\cF},(\overline{\cF}_t),\bP)$ and all processes are $(\overline{\cF}_t)$-progressively measure. The corresponding MFG is then given by   
\begin{equation} \label{control-problem4}
	\essinf_{\xi \in L^2_{\overline{\mathcal F}}(0,T;\mathbb R)} \mathbb E\left[ \int_0^T\{\eta_t (\xi_t)^2+\xi_t Y_t + \lambda_t(X_t)^2\}\,dt		\right]
\end{equation}
subject to the state dynamics
\begin{equation} \label{state-dynamics4} 
\left\{\begin{aligned}
	dX_t&= - \xi_t\,ds,  \quad t \in [0,T],\\
	X_0&=\mathcal X, ~ X_T = 0, \\
	dY_t&= \left(-\rho_t Y_t +\gamma_t ( \mu_t -(\beta_t-\alpha_t)C_t + \alpha_t (\mathbb E[\mathcal X] -  \nu_t) \right) dt, \quad t \in [0,T] \\
	Y_0 & = 0 \\
	dC_t &=\left( -(\beta_t-\alpha_t)C_t + \alpha_t (\mathbb E[\mathcal X]-  \nu_t) \right) dt, \quad t \in [0,T] \\
	C_0 &=0. 
\end{aligned}\right.
\end{equation}
and the equilibrium condition 
\begin{equation} \label{state-dynamics4a} 
\left\{\begin{aligned}
	\bE[\xi^*_t(\mu,\nu)] &= \mu_t,  \quad t \in [0,T],\\
	\bE[X^*_t(\mu,\nu)] & = \nu_t,  \quad t \in [0,T].
\end{aligned}\right.
\end{equation}
Here $\xi^*(\mu,\nu)$ denotes the unique solution to \eqref{control-problem4} given $(\mu,\nu)$, and $X^*(\mu,\nu)$ is the corresponding portfolio process. 

We prove that the MFG admits a unique solution under a weak interaction condition and that the sequence of equilibria in the finite player games converges to the mean-field equilibrium if the number of players tends to infinity. 




\section{Existence and Uniqueness of Equilibria}\label{sec:existence}

In this section we provide an existence and uniqueness of equilibrium result for both the $N$-player and the mean-field liquidation games introduced in the previous section. We first characterize the equilibria of both games in terms of solutions to certain mean-field FBSDE systems with singular terminal conditions. Subsequently, we establish the existence of a unique solution to these systems within a common mathematical framework. Finally, we prove a verification argument from which we deduce the solutions to the FBSDEs do indeed provide the desired Nash equilibria.

\subsection{Characterization of open-loop equilibria}

We start by characterizing Nash equilibria in the N-player liquidation game. 
The Hamiltonian associated with the mean-field control problem \eqref{control-problem3} and \eqref{state-dynamics3} is given by 
\begin{align*}
	H^i =&~-\sum_{j=1}^N\xi^jP^{i,j} + \sum_{j=1}^NQ^{i,j}\{  -\rho^j Y^j+\gamma^j(  \bar\xi-(\beta^j-\alpha^j)C^j  )+\alpha^j\gamma^j(\mathbb E[\bar{\mathcal X}]-\mathbb E[\bar X_t] ) \} \\
	&\quad +\sum_{j=1}^NR^{i,j}\{ -(\beta^j-\alpha^j)C^j+\alpha^j(\mathbb E[ \bar{\mathcal X}]-\mathbb E[
	\bar X_t] )  \} +\xi^iY^i+\eta^i(\xi^i)^2+\lambda^i(X^i)^2.
\end{align*}
Using the same arguments as in \cite{FGHP-2018,FH-2020} the stochastic maximum principle suggests that the best response function of player $i$ given her competitors' actions is given by
\begin{equation}\label{xi-feedback}
\xi^{*,i}=\frac{P^{i,i}-Y^i-\frac{\gamma^i}{N}Q^{i,i}}{2\eta^i},
\end{equation}
where the adjoint processes $(P^{i,j},Q^{i,j},R^{i,j})$ $(j=1, ..., N)$ satisfy the stochastic system
\begin{equation} \label{system-PY}
\left\{
\begin{aligned}
-dP^{i,j}_t=&\left(2\lambda^i_tX^i_t\delta_{ij}- \frac{1}{N}\mathbb E\left[\alpha^j_t\gamma^j_tQ^{i,j}_t\right]-\frac{1}{N}\mathbb E\left[\alpha^j_tR^{i,j}_t\right] \right)\,dt-Z^{P^{i,j}}_t\,dW_t,\\
-dQ^{i,j}_t=&\left(\frac{P^{i,i}_t-Y^i_t-\frac{\gamma^i_t}{N}Q^{i,i}_t}{2\eta^i_t}\delta_{ij}  -\rho^j_t Q^{i,j}_t\right)\,dt -Z^{Q^{i,j}}_t\,dW_t,\\
-dR^{i,j}=&~\left(-\gamma^j_t(\beta^j_t-\alpha^j_t)Q^{i,j}_t-(\beta^j_t-\alpha^j_t)R^{i,j}_t\right)\,dt-Z^{R^{i,j}}_t\,dW_t\\
Q^{i,j}_T=&~R^{i,j}_T=0,
\end{aligned}\right.
\end{equation}
with a-priori unknown terminal conditions on the processes $P^{i,j}$. It can be seen from the above system that the processes $P^{i,j}$ for $j \neq i$ are not relevant for the equilibrium dynamics and that $Q^{i,j}=R^{i,j}=0$ for $j\neq i$. Putting $P^i:=P^{i,i}$, $Q^i:=Q^{i,i}$, $R^i:=R^{i,i}$ and $M^i := P^i - Y^i$ we arrive at the following coupled mean-field forward-backward system: for $i=1, ..., N$,
\begin{equation}\label{NP-fbsde-2}
\left\{\begin{aligned}
dX^i_t=&~-\frac{M^i_t-\frac{\gamma^i_t}{N}Q^i_t}{2\eta^i_t}\,dt,\\
dY^i_t=&-\rho^i_t Y^i_t+\gamma^i_t\left\{ \frac{1}{N}\sum_{j=1}^N\frac{M^j_t-\frac{\gamma^i_t}{N}Q^j_t}{2\eta^j_t} -(\beta^i_t-\alpha^i_t)C^i_t+\alpha^i_t(\mathbb E[ \bar{\mathcal X}]-\mathbb E[\bar X_t] ) \right\}\,dt,\\
dC^i_t=&~-(\beta^i_t-\alpha^i_t)C^i_t+\alpha^i_t(\mathbb E[\bar{\mathcal X}]-\mathbb E[\bar X_t])\,dt\\
-dM^i_t=&~\left(2\lambda^i_tX^i_t-\frac{1}{N}\mathbb E\left[\alpha^i_t\gamma^i_tQ^i_t\right]-\frac{1}{N}\mathbb E\left[\alpha^i_tR^i_t\right]\right)\\
&\quad -\rho^i_t Y^i_t+\gamma^i_t\left\{ \frac{1}{N}\sum_{j=1}^N\frac{M^j_t-\frac{\gamma}{N}Q^j_t}{2\eta^j_t} -(\beta^i_t-\alpha^i_t)C^i_t+\alpha^i_t(\mathbb E[ \bar{\mathcal X}]-\mathbb E[\bar X_t] ) \right\}\,dt-Z^{M^i}_t\,dW_t,\\
-dQ^{i}_t=&\left(\frac{M^{i}_t-\frac{\gamma^i_t}{N}Q^i_t}{2\eta^i_t}  -\rho^i_t Q^i_t\right)\,dt -Z^{Q^i}_t\,dW_t,\\
-dR^i_t=&~\left(-\gamma^i_t(\beta^i_t-\alpha^i_t)Q^i_t-(\beta^i_t-\alpha^i_t)R^i_t\right)\,dt-Z^{R^i}_t\,dW_t\\
X^i_0=&~\mathcal X^i, ~ Y^i_0=C^i_0=0, ~ Q^i_T=R^i_T=X^i_T=0.
\end{aligned}\right.
\end{equation}

In terms of 
\[
\underline{\mathcal S}^i=\left(\begin{matrix}
Y^i\\
C^i
\end{matrix}\right),~A^i=\left(\begin{matrix}  
\rho^i& \gamma^i(\beta^i-\alpha^i)\\
0& \beta^i-\alpha^i
\end{matrix} \right),~B^i=(B^{i,(1)},B^{i,(2)})=\left(\begin{matrix}  
\gamma^i& -\alpha^i\gamma^i\\
0& -\alpha^i
\end{matrix} \right),
\]
and
\[
\underline{\mathcal R}^i=\left(\begin{matrix}  
\frac{\alpha^i\gamma^i}{N}\sum_{j=1}^N\mathbb E[ \mathcal X^j]\\
\frac{\alpha^i}{N}\sum_{j=1}^N\mathbb E[ \mathcal X^j]
\end{matrix} \right), ~
\mathcal P^i=\left(\begin{matrix}
Q^i\\
R^i
\end{matrix}\right),~\Theta=\left(\begin{matrix}  
1\\
0
\end{matrix} \right),~\underline\chi=\frac{1}{N}\sum_{j=1}^N\left( \begin{matrix}
\xi^{*,j}\\\mathbb E[X^j]
\end{matrix} \right)
\]
the above system can be compactly rewritten as
\begin{equation}\label{NP-fbsde}
\left\{\begin{aligned}
dX^i_t=&~-\frac{M^i_t-\frac{1}{N}\left\langle B^{i,(1)}_t , \mathcal P^i_t\right\rangle}{2\eta^i_t}\,dt,\\
d\underline{\mathcal S}^i_t=&~\left(-A^i_t\underline{\mathcal S}^i_t+B^i_t\underline\chi_t+\underline{\mathcal R}^i_t\right)\,dt\\
-dM^i_t=&~\left(2\lambda^i_tX^i_t+\left\langle \Theta, -A^i_t\underline{\mathcal S}^i_t+ B^i_t\underline\chi_t+\underline{\mathcal R}^i_t\right\rangle+\frac{1}{N}\mathbb E\left[\left\langle B^{i,(2)}_t ,\mathcal P^i_t  \right\rangle\right]\right)\,dt-Z^{M^i}_t\,d W_t,\\
-d\mathcal P^{i}_t=&~\left(-(A^i_t)^\top\mathcal P^i_t+\Theta \frac{M^i_t-\frac{1}{N}\left\langle B^{i,(1)} , \mathcal P^i_t\right\rangle}{2\eta^i_t}\right)\,dt-Z^{\mathcal P^i}_t\,d W_t\\
X^i_0=&~\mathcal X^i, ~ X^i_T=0,~\underline{\mathcal S}^i_0=(0,0)^\top, ~ \mathcal P^i_T=(0,0)^\top.
\end{aligned}\right.
\end{equation}



%
%

The Hamiltonian associated with the representative player's optimization problem in the MFG reads  
\begin{equation}
\begin{split}
	H &= \eta\xi^2+\xi Y+\lambda X^2-\xi P+Q\{ -\rho Y+\gamma( \mu-(\beta-\alpha)C ) +\alpha(\mathbb E[\mathcal X]-\nu  )  \} \\
	& \quad +R\{  -(\beta-\alpha)C+\alpha( \mathbb E[\mathcal X]-\nu )  \},
\end{split}
\end{equation}
where $(P,Q,R)$ is the adjoint processes to $(X,Y,C)$. Again, the stochastic maximum principle suggests that 
the optimal strategy is given by 
\[
	\xi=\frac{P-Y}{2\eta}.
\]
Putting $M:= P-Y$ the candidate equilibrium strategy can be obtained in terms of a solution to the FBSDE system 
\begin{equation}\label{MF-fbsde-2}
\left\{\begin{split}
dX_t=&~-\frac{M_t}{2\eta_t}\,dt\\
dY_t=&~\left(-\rho_tY_t+\gamma_t\left( \mathbb E\left[ \frac{M_t}{2\eta_t} \right] -(\beta_t-\alpha_t)C_t     \right)+\alpha_t\gamma_t(\mathbb E[\mathcal X]-\mathbb E[X_t])\right)\,dt\\
dC_t=&~\left(-(\beta_t-\alpha_t)C_t+\alpha_t( \mathbb E[\mathcal X]-\mathbb E[X_t]  )\right)\,dt\\
-d M_t=&~\left(2\lambda_tX_t-\rho_tY_t+\gamma_t\left( \mathbb E\left[ \frac{M_t}{2\eta_t} \right] -(\beta_t-\alpha_t)C_t     \right)+\alpha(\mathbb E[\mathcal X]-\mathbb E[X_t])\right)\,dt-Z^{M}_t\,d\overline W_t\\
-dQ_t=&~\left(\frac{ M_t}{2\eta_t}-\rho_tQ_t\right)\,dt-Z^Q_t\,d\overline W_t\\
-dR_t=&~\left(-\gamma_t(\beta_t-\alpha_t)Q_t-(\beta_t-\alpha_t)R_t\right)\,dt-Z^R_t\,d\overline W_t\\
X_0=&~\mathcal X,~Y_0=C_0=0,~Q_T=R_T=X_T=0.
\end{split}\right.
\end{equation}

In terms of
\[
\overline{\mathcal S}=\left(\begin{matrix}
	Y\\
	C
\end{matrix}\right),~A=\left(\begin{matrix}  
\rho& \gamma(\beta-\alpha)\\
0& \beta-\alpha
\end{matrix} \right),~B=(B^{(1)},B^{(2)})=\left(\begin{matrix}  
\gamma& -\alpha\gamma\\
0& -\alpha
\end{matrix} \right)
\]\[
\overline{\mathcal R}=\left(\begin{matrix}  
\alpha\gamma\mathbb E[\mathcal X]\\
\alpha\mathbb E[\mathcal X]
\end{matrix} \right),~
{\mathcal P}=\left(\begin{matrix}
Q\\
R
\end{matrix}\right),~\overline\chi=\left( \begin{matrix}
	\mathbb E[\frac{M}{2\eta}]\\\mathbb E[X]
\end{matrix} \right),
\]
this system can be compactly rewritten as
\begin{equation}\label{MF-fbsde}
\left\{\begin{split}
dX_t=&~-\frac{M_t}{2\eta_t}\,dt\\
d\overline{\mathcal S}_t=&~\left(-A_t\overline{\mathcal S}_t+B_t\overline\chi_t+\overline{\mathcal R}_t\right)\,dt\\
-d M_t=&~\left(2\lambda_tX_t+\left\langle \Theta,-A_t\overline{\mathcal S}_t+B_t\overline\chi_t+\overline{\mathcal R}_t \right\rangle\right)\,dt-Z^M_t\,d\overline W_t\\
-d\mathcal P_t=&~ \left(-A^\top_t\mathcal P_t+\Theta\frac{M_t}{2\eta_t}  \right) \,dt-Z^{\mathcal P}_t\,d\overline W_t\\
X_0=&~\mathcal X,~X_T=0,~\overline{\mathcal S}_0=(0,0)^\top,~\mathcal P_T=(0,0)^\top.
\end{split}\right.
\end{equation}


\subsection{The mean field FBSDE} \label{MFFBSDE}

This section provides a unified approach for solving a class of linear mean-field FBSDE systems that contains the systems \eqref{NP-fbsde} and \eqref{MF-fbsde} as special cases.  Specifically, we consider the FBSDE system
\begin{equation}\label{fbsde}
\left\{\begin{aligned}
dX^i_t=&~-\frac{M^i_t-\frac{1}{N}\left\langle \widehat B^{i,(1)}_t,\mathcal P^{i}_t\right\rangle}{2\eta^i_t}\,dt,\\
d\mathcal S^i_t=&~\left(-A^i_t\mathcal S^i_t+K^i_t\chi_t+\mathcal R^i_t\right)\,dt,\\
-dM^i_t=&~\left(2\lambda^i_tX^i_t+\frac{1}{N}\mathbb E\left[\left\langle \widehat B^{i,(2)}_t ,\mathcal P^i_t  \right\rangle\right]
 +\left\langle\Theta,-A^i_t\mathcal S^i_t+K^i_t\chi_t+\mathcal R^i_t\right\rangle\right)\,dt - Z^{M^i}_t\,dW_t,\\
-d\mathcal P^i_t=&~\left(-(A^i_t)^\top \mathcal P^i_t+\Theta\frac{M^i_t-\frac{1}{N}\left\langle \widehat B^{i,(1)}_t,\mathcal P^{i}_t\right\rangle}{2\eta^i_t}\right)\,dt-Z^{\mathcal P^i}_t\,dW_t,\\
X^i_0=&~\mathcal X^i, ~  X^i_T=0,~ \mathcal S^i_0=(0,0)^\top, ~ \mathcal P^i_T=(0,0)^\top,
\end{aligned}\right.
\end{equation}
for $i=1, ..., N$ where $K^i=(K^{i,(1)},K^{i,(2)},K^{i,(3)})$ is an $\mathbb R^{2\times 3}$-valued stochastic process,   
\[
	\xi^j=\frac{M^j-\frac{1}{N}\left\langle \widehat B^{j,(1)},\mathcal P^j \right\rangle}{2\eta^j},
\]
and
\begin{equation*}
\begin{aligned}
	\chi=(\overline\xi,\mathbb E[\overline X], \mathbb E[\overline\xi] )^\top=\left( \frac{1}{N}\sum_{j=1}^N\xi^j,\mathbb E\left[\frac{1}{N}\sum_{j=1}^NX^j \right] ,\frac{1}{N}\sum_{j=1}^N\mathbb E[\xi^j]\right)^\top.
\end{aligned}
\end{equation*}

\begin{remark}\label{rmk:general-to-original}
Let $0_{2 \times 1}$ be the $2 \times 1$ zero matrix. If $K^i=(B^{i,(1)},B^{i,(2)},0_{2\times 1})$, $\widehat B^{i,(1)}=B^{i,(1)}$, $\widehat B^{i,(2)}=B^{i,(2)}$ and $\mathcal R^i=\underline{\mathcal R}^i$, then the system \eqref{fbsde} reduces to \eqref{NP-fbsde}. 
	If $N=1$, $\widehat B^{1,(1)}=\widehat B^{1,(2)}=0$, $K^{1}=(0_{2\times 1},B^{(2)},B^{(1)})$ and $\mathcal R^1=\overline{\mathcal R}$, then it reduces to \eqref{MF-fbsde}. 
\end{remark}


In order to solve the above system we make the following assumptions. 

\begin{ass}\label{Ass-1}
(i) The processes $A^i, \widehat B^i, K^i$ are progressively measurable and uniformly bounded:
\begin{equation*}
\begin{split}
\|A\| &:= \sup_i \|A^i\| < \infty, \\
\|\widehat B^{(1)}\| &:= \sup_i \|\widehat B^{i,(1)}\| <\infty,~\|\widehat B^{(2)} \| := \sup_i \| \widehat B^{i,(2)}\|<\infty, \\
\|K^{(1)}\| & := \sup_i \|K^{i,(1)}\|<\infty,~\|K^{(2)}\| := \sup_i \|K^{i,(2)}\|<\infty,~\|K^{(3)}\|  := \sup_i \|K^{i,(3)}\|<\infty.
\end{split}
\end{equation*}
(ii) There exists constants ${\widehat\rho}> 0$ and $\widetilde\rho>0$ such that for any $\mathbb R^2$-valued process $y$ and $i=1,\cdots,N,$
		\begin{equation}\label{cond-A}
	\mathbb E\left[y_t^\top A^i_t y_t\right]\geq {\widehat\rho}\mathbb E\left[y_t^\top y_t\right],\quad \mathbb E\left[y_t^\top (A^i_t)^\top y_t+y_t^\top\frac{\Theta\langle\widehat B^{i,(1)}_t,y_t\rangle}{2N\eta^i_t}\right]\geq \widetilde \rho\mathbb E\left[y_t^\top y_t\right].
	\end{equation}
(iii) The processes $\lambda^i$ and $\eta^i$ are progressively measurable, essentially bounded and there exist constants $\theta_0,\theta_1,\theta_2,\theta_3>0$ such that  $\lambda_{\min} := \inf\limits_i \lambda^i_{\min}$ and $\eta_{\min} := \inf\limits_i \eta^i_{\min}$ satisfy   
	\begin{equation}\label{ass-1}
	\left\{\begin{aligned}
	&2\lambda_{\min}-\frac{\theta_0+\theta_1+\theta_2}{2}-\left(1+\frac{1}{\theta_3}\right)\|K^{(2)}\|^2\left(\frac{\|A\|^2}{2\theta_1 {\widehat\rho^2}}+\frac{1}{2\theta_2}\right)>0,\\
	&~2\eta_{\min}-\frac{\|\widehat B^{(1)}\|}{N\widetilde\rho}-\frac{\|\widehat B^{(2)}\|^2}{2N^2\widetilde\rho^2\theta_0} -\left(1+\frac{\|\widehat B^{(1)}\|}{2N\eta_{\min}\widetilde\rho}\right)^2(1+\theta_3)(\| K^{(1)}\|+\|K^{(3)}\|)^2\left(\frac{\|A\|^2}{2\theta_1{\widehat\rho^2}}+\frac{1}{2\theta_2}\right)>0.
	\end{aligned}\right.
	\end{equation}
(iv) The random variables ${\cal X}^i$ are square integrable for each $i=1,\cdots,N$.
\end{ass}

The first assumption is standard. The second assumption essentially means that $A^i+(A^i)^\top$ is  uniformly positive definite. The third condition is similar to conditions made in \cite{FGHP-2018, FH-2020}. It states that the impact of other players on an individual player's best response function is weak enough. Specifically, it requires either the cost functions to be dominated by the terms $\eta^i_t (\xi^i_t)^2$ and $\lambda^i_t (X^i_t)^2$ that are unaffected by the choices of other players (large $\lambda_{\min}$ and large $\eta_{\min}$), or the impact of other players on an individual player's cost function {and} state dynamics to be weak enough. 

\begin{remark}
If the number of players is large enough and the processes $\alpha^i, \beta^i, \gamma^i, \rho^i$ are identical across players and constant (cf.~Remark \ref{rem:unique}), then condition \eqref{cond-A} reduces to $4\rho>\gamma^2(\beta-\alpha), \beta>\alpha$ and we can define $\widehat\rho$ by the minimum eigenvalue of the matrix $\frac{A+A^\top}{2}$, i.e.
\[
	{\widehat\rho}:=\frac{\rho+\beta-\alpha-\sqrt{(\rho+\beta-\alpha)^2-4\rho(\beta-\alpha)+\gamma^2(\beta-\alpha)^2}}{2}.  
\]
\end{remark}

We are now ready to state and prove our main result of this section. It states that that our general FBSDE system \eqref{fbsde} admits a unique solution in a suitable space if Assumption \ref{Ass-1} is satisfied. The proof is based on an extension of the continuation method introduced in \cite{FGHP-2018}.  

\begin{theorem}\label{fbsde-thm}
	Under Assumption \ref{Ass-1}, there exists a unique solution $$(X^i,\mathcal S^i,M^i,\mathcal P^i,Z^{M^i},Z^{\mathcal P^i})\in \mathcal H_{a,\mathcal F}\times \mathbb S^{2}_{\mathcal F}\times L^{2}_{\mathcal F}\times\mathcal H_{\iota,\mathcal F}\times L^{2,-}_{\mathcal F}\times L^{2}_{\mathcal F}$$
	to the FBSDE system \eqref{fbsde} for some positive constants $a<1,~\iota<1/2.$
\end{theorem}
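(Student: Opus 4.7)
The strategy is the three-stage one foreshadowed in the introduction: an affine ansatz converts \eqref{fbsde} into an FBSDE with known but singular terminal data; a contraction argument on a short interval near $T$ yields small-time solvability; and a continuation method, extending the one of \cite{FGHP-2018} to the present higher-dimensional mean-field setting, stitches the local solution into a global one on $[0,T]$. Everything is driven by a single a priori estimate that closes precisely when Assumption \ref{Ass-1} holds.

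Concretely, I would seek solutions of the affine form
\[
M^i_t = \Gamma^i_t X^i_t + \la \Psi^i_t, \mathcal S^i_t \ra + \phi^i_t, \qquad \mathcal P^i_t = \Lambda^i_t X^i_t + \Xi^i_t \mathcal S^i_t + \psi^i_t,
\]
with coefficients identified by matching drifts. The triple $(\Gamma^i,\Lambda^i,\Xi^i)$ satisfies a Riccati-type system whose terminal value blows up, $\Gamma^i_t\to+\infty$ as $t\uparrow T$, in order to enforce the liquidation constraint $X^i_T=0$; the pair $(\phi^i,\psi^i)$ solves a linear mean-field BSDE coupled across players through $\chi$. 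After substitution, the reduced FBSDE for $(X^i,\mathcal S^i,\phi^i,\psi^i)$ has known initial and terminal data but a drift coefficient of order $\Gamma^i/(2\eta^i)$ that is singular at $T$, which is exactly what motivates the weighted spaces $\mathcal H_{a,\mathcal F}$ and $\mathcal H_{\iota,\mathcal F}$ with $a<1$, $\iota<1/2$. Small-time existence follows from a Banach fixed-point argument in these weighted norms, using the explicit blow-up rate of the Riccati component.

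To globalize, I would parameterize \eqref{fbsde} by $\theta\in[0,1]$, interpolating between a trivially solvable decoupled system at $\theta=0$ and the target system at $\theta=1$. Letting $I\subset[0,1]$ be the set of $\theta$ for which the parameterized system has a unique solution in the target space, a Picard iteration around any known solution makes $I$ open while uniform a priori estimates make $I$ closed; since $0\in I$, this forces $I=[0,1]$. The main obstacle, and the real heart of the proof, is obtaining that a priori bound uniformly in $\theta$ and in the regularization used to tame the singularity at $T$. I would obtain it by applying It\^o's formula to
\[
\sum_{i=1}^N \mathbb E\bigl[ \la M^i_t, X^i_t\ra + \la \mathcal P^i_t, \mathcal S^i_t\ra \bigr]
\]
on $[0,T-\eps]$ and letting $\eps\downarrow 0$. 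The boundary terms vanish thanks to $X^i_T=0$ and $\mathcal P^i_T=0$, while the drift rearranges into a quadratic form in $(X^i,\mathcal S^i,\xi^i,\mathcal P^i)$ plus indefinite cross-terms coming from the mean-field expectation $\mathbb E[\la \widehat B^{i,(2)},\mathcal P^i\ra]/N$ and the aggregate $\chi$. These cross-terms are absorbed into the diagonal entries by repeated Young's inequalities calibrated with the constants $\theta_0,\theta_1,\theta_2,\theta_3$, after which conditions \eqref{cond-A} and \eqref{ass-1} are exactly the statements needed to make the remaining quadratic form uniformly positive definite across all players. This calibration is where the weak-interaction condition does its work; without it the mean-field averages propagating through $\chi$ destroy positivity.

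Uniqueness is then read off from the same expansion applied to the difference of two solutions: the quadratic form is strictly positive in the difference, forcing it to vanish. Membership of $(X^i,\mathcal P^i)$ in the weighted spaces $\mathcal H_{a,\mathcal F}$, $\mathcal H_{\iota,\mathcal F}$ follows from the explicit blow-up rate of $\Gamma^i$ near $T$ combined with the linear dependence of $M^i,\mathcal P^i$ on $X^i$ through the affine ansatz, while $\mathcal S^i\in \mathbb S^2_{\mathcal F}$ is a direct consequence of its affine forward dynamics with bounded coefficients.
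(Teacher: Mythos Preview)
Your outline matches the paper's strategy at the coarse level---affine ansatz, continuation, a priori estimate under the weak-interaction condition---but there are two concrete deviations, and one of them is a genuine gap.

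\textbf{The a priori estimate will not close as written.} You propose to differentiate $\sum_i \mathbb E[\la M^i,X^i\ra + \la \mathcal P^i,\mathcal S^i\ra]$. The first pairing is exactly what the paper uses and yields the good terms $2\lambda^i(X^i)^2 + (M^i)^2/(2\eta^i)$. But the second pairing produces only cross terms: the $A^i$-contributions cancel, leaving $-\xi^i\la\Theta,\mathcal S^i\ra + \la\mathcal P^i, K^i\chi+\mathcal R^i\ra$, with no sign-definite $|\mathcal P^i|^2$ or $|\mathcal S^i|^2$ to absorb into. So Young's inequality alone cannot bound $\mathcal P^i$ and $\mathcal S^i$, and the estimate does not close. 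The paper does \emph{not} pair $\mathcal P^i$ with $\mathcal S^i$; instead it applies It\^o to $|\mathcal P^i|^2$ and $|\mathcal S^i|^2$ separately, and this is precisely where condition \eqref{cond-A} enters: the coercivity of $A^i$ (resp.\ $(A^i)^\top + \Theta\widehat B^{i,(1)}/(2N\eta^i)$) gives $\int|\mathcal S^i|^2 \le \widehat\rho^{-2}\int|K^i\chi|^2$ and $\int|\mathcal P^i|^2 \le \widetilde\rho^{-2}\int|M^i/(2\eta^i)|^2$. These bounds are then substituted back into the $X^iM^i$ identity, and only after that do the Young constants $\theta_0,\ldots,\theta_3$ and condition \eqref{ass-1} make the resulting form positive. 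Without this route you have no handle on $\mathcal P^i,\mathcal S^i$.

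\textbf{The ansatz and continuation are also set up differently.} The paper uses the simpler decomposition $M^i=\mathscr A^iX^i+\mathscr B^i$ only, with $\mathscr A^i$ the scalar singular Riccati from \cite{AJK-2014,GHS-2013}; there is no affine representation of $\mathcal P^i$ and no $\mathcal S^i$ term in $M^i$. The continuation is not a generic homotopy $\theta\in[0,1]$ but a specific one in a parameter $p$ multiplying $M^i$ inside the $\mathcal P^i$-driver, augmented by free inputs $(f^i,g^i)$; at $p=0$ the $\mathcal P^i$-equation decouples from $M^i$ and the whole system is solved directly on $[0,T]$ (no short-time step is used). The induction then shows solvability at $p$ implies solvability at $p+\sigma$ for a fixed $\sigma>0$ independent of $p$, via the contraction built from the separate $|\mathcal P^i|^2$, $|\mathcal S^i|^2$ estimates above. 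Your more elaborate ansatz and generic continuation may be workable, but they are not what the paper does, and they would still need the missing coercivity argument for $\mathcal P^i$ and $\mathcal S^i$.
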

\begin{proof}
Let $p\in[0,1], ~f^j\in L^2_{\mathcal F},~ g^j\in \mathcal H_{a,\mathcal F}$ for each $j=1,\cdots,N$, where $a$ is to be determined later. We apply the method of continuation to the following FBSDE indexed by $(p,f^j,g^j)_{j=1,\cdots,N}$:
	\begin{equation}\label{eq:fixed-point-general}
	\left\{\begin{aligned}
	d\widetilde X^i_t=&~-\frac{\widetilde M^i_t-\frac{1}{N}\left\langle \widehat B^{i,(1)}_t,\widetilde{ \mathcal P}^{i}_t\right\rangle}{2\eta^i_t}\,dt,\\
	d \widetilde {\mathcal S}^i_t=&~\left(-A^i_t\widetilde {\mathcal S}^i_t+K^i_t\widetilde {\chi_t}+\mathcal R^i_t\right)\,dt,\\
	-d\widetilde M^i_t=&~\left(2\lambda^i_t\widetilde X^i_t+\frac{1}{N}\mathbb E\left[\left\langle \widehat B^{i,(2)}_t,\widetilde {\mathcal P}^{i}_t\right\rangle\right] +\left\langle\Theta,-A^i_t\widetilde {\mathcal S}^i_t+K^i_t\widetilde{ \chi_t}+\mathcal R^i_t\right\rangle\right)\,dt-{Z}^{\widetilde M^i}_t\,dW_t,\\
	-d\widetilde {\mathcal P}^i_t=&~\left(-(A^i_t)^\top\widetilde{ \mathcal P}^i_t+\Theta\frac{p\widetilde M^i_t-\frac{1}{N}\left\langle \widehat B^{i,(1)}_t,\widetilde {\mathcal P}^{i}_t\right\rangle}{2\eta^i_t}+\Theta f^{i}_t\right)\,dt- Z^{\widetilde{\mathcal P}^i}_t\,dW_t,\\
	\widetilde X^i_0=&~x^i,~\widetilde X^i_T=0,~\widetilde {\mathcal S}^i_0=(0,0)^\top,~\widetilde {\mathcal P}^i_T=(0,0)^\top,
	\end{aligned}\right.
	\end{equation}
	where for  $j=1,\cdots,N$,
	\begin{equation*}
	\left\{\begin{aligned}
	&\widetilde\xi^j:=\frac{p\widetilde M^j-\frac{1}{N}\left\langle \widehat B^{j,(1)},\widetilde{\mathcal P}^{j}\right\rangle}{2\eta^j}+f^j\\
	&\widetilde\chi:= \frac{1}{N}\sum_{j=1}^N \left(\widetilde\xi^j,\mathbb E[p\widetilde X^j+g^j], \mathbb E[\widetilde\xi^j]\right).
%
%
	\end{aligned}\right.
	\end{equation*}

We now make the ansatz $$\widetilde M^i={\mathscr A}^i\widetilde X^i+{\mathscr B}^i.$$  Integration by parts suggests that
\begin{equation}\label{eq:mathscr-A-i}
\left\{\begin{aligned}
d{\mathscr A}^i_t&=\left(2\lambda^i_t-\frac{({\mathscr A}^i_t)^2}{2\eta^i_t}\right)\,dt-Z^{\mathscr A^i}_t\,dW^i_t,\\
\lim\limits_{t\nearrow T}{\mathscr A}^i_t&=+\infty
\end{aligned}\right.
\end{equation}
and that $\mathscr B^i$ satisfies the BSDE  
	\begin{equation}\label{eq:mathscr-B-i}
		\begin{split}
	-d{\mathscr B}^i_t=&~\left(-\frac{{\mathscr A}^i_t{\mathscr B}^i_t}{2\eta^i_t}+\frac{{\mathscr A}^i_t}{2N\eta^i_t}\left\langle \widehat B^{i,(1)}_t,\widetilde {\mathcal P}^{i}_t\right\rangle+\frac{1}{N}\mathbb E\left[\left\langle \widehat B^{i,(2)}_t,\widetilde {\mathcal P}^{i}_t\right\rangle\right]+\left\langle\Theta,-A^i_t\widetilde{ \mathcal S}^i_t+K^i_t\widetilde \chi_t+\mathcal R^i_t\right\rangle\right)\,dt \\
	& \quad -Z^{\mathscr B^i}_t\,dW_t
		\end{split}
	\end{equation}
	on $[0,T)$. It has been shown in \cite{AJK-2014,GHS-2013} that \eqref{eq:mathscr-A-i}  admits a unique solution $({\mathscr A}^i,Z^{\mathscr A^i})\in\mathcal H_{-1}\times L^2_{\mathcal F}$ and that 
\begin{equation}\label{esti-A}
\exp\left(-\int^s_r \frac{\mathscr A^i_u}{2\eta^i_u}\,du\right)\leq \left(\frac{T-s}{T-r}\right)^{b},\quad \text{ where }b:=\min_i\frac{\eta^i_{\min}}{\|\eta^i\|}\in(0,1].
\end{equation} 
The existence of a unique solution to \eqref{eq:mathscr-B-i} will be shown in Step 1 below. 

We now proceed in two steps. In Step 1 we prove that \eqref{eq:fixed-point-general} admits a unique solution when $p=0$. In Step 2 we show that once \eqref{eq:fixed-point-general} admits a unique solution for some $p \geq 0$ and for any $(f^j,g^j)_{j=1,\cdots,N}$, then the same holds if $p$ is replaced by $p+\sigma$ for every $\sigma \leq \sigma_0$ where $\sigma_0$ is a strictly positive constant that is independent of $p$. By iterating $p$ we can then solve \eqref{eq:fixed-point-general} for $p=1$. It reduces to \eqref{fbsde} by letting $f^j=g^j=0$ for all $j=1,\cdots,N$.  

\textbf{Step 1.} In this step, we prove that the system \eqref{eq:fixed-point-general} is uniquely solvable in $\mathcal H_{a,\mathcal F}\times \mathbb S^{2}_{\mathcal F}\times L^{2}_{\mathcal F}\times\mathcal H_{\iota,\mathcal F}\times L^{2,-}_{\mathcal F}\times L^2_{\mathcal F}$ for some positive constants $a<b,~\iota<1/2$ when $p=0$.

To this end, we first consider the mean-field BSDE for $(\widetilde {\mathcal P}^i,Z^{\widetilde{\mathcal P}^i})$. This BSDE has a Lipschitz continuous driver and so it has a unique solution in the space $\mathbb S^2_{\mathcal F}\times L^2_{\mathcal F}$; see e.g.~\cite[Theorem 3.1]{Buckdahn2009}. Taking conditional expectations on both sides yields
\begin{equation*}
	\begin{split}
		\widetilde {\mathcal P}^i_t=\mathbb E\left[\left.\int_t^T-(A^i_s)^\top\widetilde {\mathcal P}^i_s-\Theta\frac{\left\langle \widehat B^{i,(1)}_s, \widetilde {\mathcal P}^i_s \right\rangle}{2N\eta^i_s}+\Theta f^i_s\,ds \right|\mathcal F_t\right],
	\end{split}
\end{equation*}
which implies that
\begin{equation*}
\begin{split}
\frac{|\widetilde {\mathcal P}^i_t|}{(T-t)^\iota}\leq&~ \left(\|A\|+\frac{\|\widehat B^{(1)}\|}{2N\eta_{\min}}\right)\frac{1}{(T-t)^\iota}\mathbb E\left[\left.\int_t^T|\widetilde {\mathcal P}^i_s|\,ds\right|\mathcal F_t\right]+\frac{1}{(T-t)^\iota}\mathbb E\left[\left.\int_t^T|f^i_s|\,ds\right|\mathcal F_t\right].
\end{split}
\end{equation*}
Next, we take $\mathbb E[\sup_{0\leq t\leq T}(\cdot)^2]$ on both sides of the above inequality. By H\"older's inequality, Doob's maximal inequality and $\iota<1/2$
\begin{align*}
&~\mathbb E\left[\sup_{0\leq t\leq T}\left(	\frac{1}{(T-t)^\iota}\mathbb E\left[\left.\int_t^T|f^i_s|\,ds\right|\mathcal F_t\right]	\right)^2\right]\\
\leq&~\mathbb E\left[ \left.\sup_{0\leq t\leq T}\left( \mathbb E\left[ \int_0^T|f^i_s|^{\frac{1}{1-\iota}}\,ds\right|\mathcal F_t \right] \right)^{2(1-\iota)}    \right]\leq \left(\frac{2-2\iota}{1-2\iota}\right)^{2(1-\iota)}T^{1-2\iota}\mathbb E\left[\int_0^T|f^i_s|^2\,ds\right].
\end{align*}
Similarly, we have that
\begin{equation*}
~\mathbb E\left[\sup_{0\leq t\leq T}\left(	\frac{1}{(T-t)^\iota}\mathbb E\left[\left.\int_t^T|\widetilde {\mathcal P}^i_s|\,ds\right|\mathcal F_t\right]	\right)^2\right]\leq \left(\frac{2-2\iota}{1-2\iota}\right)^{2(1-\iota)}T^{1-2\iota}\mathbb E\left[\int_0^T|\widetilde {\mathcal P}^i_s|^2\,ds\right].
\end{equation*}	
Therefore, we conclude that
\begin{align*}
&~\mathbb E\left[\sup_{0\leq t\leq T}\left(\frac{|\widetilde {\mathcal P}^i_t|}{(T-t)^\iota}\right)^2\right]\leq~C
\left(\|\widetilde {\mathcal P}^i\|^2_{\mathbb S^2}+\|f^i\|^2_{L^2}\right),
\end{align*}
which implies that $\widetilde {\mathcal P}^i\in\mathcal H_{\iota,\mathcal F}$.
	Next, we consider the process $\widetilde {\mathcal S}^i$. Since it solves a linear ODE we get that 
	\begin{equation*}
\mathbb E\left[\sup_{0\leq t\leq T}|\widetilde {\mathcal S}^i_t|^2\right]\leq C\left( \|\mathcal R^i\|^2_{L^2}+\sum_{i=1}^N\|f^i\|^2_{L^2}+\sum_{i=1}^N\|g^i\|^2_{a}\right).
	\end{equation*}
	As a result, $\widetilde {\mathcal S}^i\in \mathbb S^2_{\mathcal F}$.
Next, we set, for $t\in [0,T)$
	\begin{equation*}
	\begin{aligned}
	{\mathscr B}^i_t:=&\mathbb E\left[\left.\int^T_te^{-\int^s_t\frac{{\mathscr A	}^i_r}{2\eta^i_r}\,dr}\left(\frac{{\mathscr A}^i_s}{2N\eta^i_s}\left\langle \widehat B^{i,(1)}_s,\widetilde {\mathcal P}^{i}_s\right\rangle+\frac{1}{N}\mathbb E\left[\left\langle \widehat B^{i,(2)}_s,\widetilde {\mathcal P}^{i}_s\right\rangle\right] \right. \right. \right. \\
	& ~ \left.  \left. \left. +\left\langle\Theta,-A^i_s\widetilde {\mathcal S}^i_s+K^i_s\widetilde \chi_t+\mathcal R^i_s\right\rangle\right) \,ds\right| \mathcal F_t \right].
	\end{aligned}
	\end{equation*}
	The estimate \eqref{esti-A} along with Doob's maximal inequality yields a constant $C > 0$ s.t.~for any $\epsilon>0,$
	\begin{equation}\label{estimate-B-general}
	\mathbb E\left[\sup_{0\leq t\leq T-\epsilon}\left| {\mathscr B}^i_t\right|^2 \right]\leq C\left(\|\widetilde {\mathcal P}^i\|^2_{\iota}+\|\widetilde{\mathcal S}^i\|^2_{\mathbb S^2}+\|\mathcal R^i\|^2_{L^2}+\sum_{i=1}^N\|f^i\|^2_{L^2}+\sum_{i=1}^N\|g^i\|^2_{a}\right).
	\end{equation}
	Thus, $ {\mathscr B}^i$ belongs to $\mathbb S^{2,-}_{\mathcal F}$ and so the martingale representation theorem yields a unique process $Z^{\mathscr B^i}\in L^{2,-}$ such that the pair $(\mathscr B^i,Z^{\mathscr B^i})$ satisfies the BSDE \eqref{eq:mathscr-B-i}.
	
	We now analyze the process $\widetilde X^i$. Taking the ansatz $\widetilde M^i={\mathscr A}^i\widetilde X^i+ {\mathscr B}^i$ into the SDE of $\widetilde X^i$ yields
	\begin{equation*}
	\widetilde X^i_t=\mathcal X^i e^{-\int^t_0\frac{{\mathscr A}^i_r}{2\eta^i_r}\,dr}-\int^t_0e^{-\int^t_s\frac{{\mathscr A}^i_r}{2\eta^i_r}\,dr}\frac{ {\mathscr B}^i_s-\frac{1}{N}\left\langle \widehat B^{i,(1)}_s,\widetilde {\mathcal P}^{i}_s\right\rangle}{2\eta^i_s}\,ds.
	\end{equation*}
Since $a<b\leq 1$, it follows from \eqref{esti-A} that
	\begin{equation*}
	\begin{aligned}
	\mathbb E\left[\sup_{0\leq t\leq T}\left|\frac{\widetilde X^i_t}{(T-t)^a}\right|^2 \right]
\leq& ~ C\left(\|\mathcal X^i\|_{L^2}+\mathbb E\left[\int^T_0\left|\frac{ {\mathscr B}^i_s}{(T-s)^a}\right|^2\,ds \right]
+\mathbb E\left[\sup_{0\leq t\leq T}\left|\frac{\widetilde {\mathcal P}^i_t}{(T-t)^\iota}\right|^2 \right]\right)\\
=&~C\left(\|\mathcal X^i\|_{L^2}+\lim_{\epsilon\rightarrow 0}\mathbb E\left[\int^{T-\epsilon}_0\left|\frac{ {\mathscr B}^i_s}{(T-s)^a}\right|^2\,ds \right]
	+\|\widetilde {\mathcal P}^i\|^2_{\iota} \right)\\
\leq&~C\left(\|\mathcal X^i\|_{L^2}+\lim_{\epsilon\rightarrow 0}\mathbb E\left[\sup_{0\leq t\leq T-\epsilon}\left| {\mathscr B}^i_t\right|^2 \right]
	+\|\widetilde {\mathcal P}^i\|^2_{\iota}\right).
	\end{aligned}
	\end{equation*}
In view of the estimate \eqref{estimate-B-general} this shows that $\widetilde X^i\in \mathcal H_{a,\mathcal F}$. 

It remains to analyze the process $\widetilde M^i$. Using the equality $\widetilde M^i={\mathscr A}^i\widetilde X^i+ {\mathscr B}^i$ and \eqref{estimate-B-general} again, we see that for each $0\leq\tau <T$
	\begin{equation}\label{M0-in-L2}
	\mathbb E\left[\sup_{0\leq t\leq \tau}\left|\widetilde  M^i_t\right|^2 \right]\leq \frac{C}{(T-\tau)^{2(1-a)}}\|\widetilde X^i\|^2_{a}+\mathbb E\left[\sup_{0\leq t\leq \tau}\left| {\mathscr B}^i_t\right|^2 \right].
	\end{equation}
 Moreover, 
	for any $\epsilon>0,$ integration by parts implies that
	\begin{equation}\label{tildeX}
	\begin{aligned}
	&\widetilde X^i_{T-\epsilon}\widetilde M^i_{T-\epsilon}-\widetilde X^i_0\widetilde M^i_0\\
	=&\int^{T-\epsilon}_0\widetilde X^i_td\widetilde M^i_t+\int^{T-\epsilon}_0\widetilde M^i_td\widetilde X^i_t\\
	=&-\int^{T-\epsilon}_0\widetilde X^i_t\left(2\lambda^i_tX^i_t+\frac{1}{N}\mathbb E\left[\left\langle \widehat B^{i,(2)}_t,\widetilde{\mathcal P}^{i}_t\right\rangle\right]+\left\langle\Theta,-A^i_t\widetilde{\mathcal S}^i_t+K^i_t \widetilde\chi_t+\mathcal R^i_t\right\rangle\right)\,dt\\
	&-\int^{T-\epsilon}_0\widetilde M^i_t\frac{\widetilde M^i_t-\frac{1}{N}\left\langle \widehat B^{i,(1)}_t,\widetilde {\mathcal P}^{i}_t\right\rangle}{2\eta^i_t}\,dt+\textrm{martingale part}.
	\end{aligned}
	\end{equation}
	Since $$\widetilde X^i_{T-\epsilon}\widetilde M^i_{T-\epsilon}={\mathscr A}^i_{T-\epsilon}(\widetilde X^i_{T-\epsilon})^2+\widetilde X^i_{T-\epsilon} {\mathscr B}^i_{T-\epsilon}\geq \widetilde X^i_{T-\epsilon} {\mathscr B}^i_{T-\epsilon},$$ by taking expectations on both sides and using \eqref{M0-in-L2} we obtain that
	\begin{equation*}
	\begin{aligned}
	&\mathbb E\left[\int^{T-\epsilon}_0 2\lambda^i_t(\widetilde X^i_t)^2+\frac{(\widetilde M^i_t)^2}{2\eta^i_t}\,dt\right]\\
	\leq& ~\epsilon' \mathbb E\left[\int^{T-\epsilon}_0 (\widetilde M^i_t)^2\,dt\right]+C(\epsilon' )\left(\|\widetilde X^i\|^2_{a}+\| {\mathscr B}^i\|^2_{\mathbb S^{2,-}}+\|\widetilde {\mathcal S}^i\|^2_{\mathbb S^{2}}+\|\widetilde {\mathcal P}^i\|^2_{\iota}+\|\mathcal R^i\|^2_{L^2}+\sum_{i=1}^N\|f^i\|^2_{L^2}+\sum_{i=1}^N\|g^i\|^2_{a}\right).
	\end{aligned}
	\end{equation*}	
	Letting $\epsilon' <\frac{1}{2\|\eta\|}$ and then taking $\epsilon\rightarrow 0,$ we conclude that $\widetilde M^i\in L^2_{\mathcal F}.$ The martingale representation theorem yields a unique $Z^{\widetilde M^i}\in L^{2,-}_{\mathcal F}$.
	
\textbf{Step 2.} We now prove that if \eqref{eq:fixed-point-general} with parameter $p$ admits a solution in $\mathcal H_{a,\mathcal F}\times\mathbb S^2_{\mathcal F}\times L^2_{\mathcal F}\times\mathcal H_{\iota,\mathcal F}\times L^{2,-}_{\mathcal F}\times L^2_{\mathcal F}$ for any $f^{i}\in L^2_{\mathcal F}, g^i\in \mathcal H_{a,\mathcal F}$, then there exists a strictly positive constant $\sigma_0$ that is independent of $p$ and $f^i,g^i$ such that the same result holds for $p+\sigma$ whenever $\sigma\in[0,\sigma_0]$.	

For any $(X^i,M^i)\in \mathcal H_{a,\mathcal F}\times L^2_{\mathcal F}$, it holds that $$f^i(M):=\sigma\frac{M^i}{2\eta^i}+f^i\in L^2_{\mathcal F}, \quad g^i(X):=\sigma X^i+g^i\in \mathcal H_{a,\mathcal F}.$$
Hence by assumption there exists a unique solution $(\widetilde X^i, \widetilde{\mathcal S}^i,\widetilde M^i,\widetilde{\mathcal P}^i,Z^{\widetilde M^i},Z^{\widetilde{\mathcal P}^i})$ in $\mathcal H_{a,\mathcal F}\times\mathbb S^{2}_{\mathcal F}\times L^{2}_{\mathcal F}\times\mathcal H_{\iota,\mathcal F}\times L^{2,-}_{\mathcal F}\times L^2_{\mathcal F}$ to the FBSDE system \eqref{eq:fixed-point-general} with $f^i=f^i(M)$ and $g^i=g^i(X)$. 
It is now sufficient to show that the mapping
	\[
	\Phi:\left((X^j)_{j=1,\cdots,N},(M^j)_{j=1,\cdots,N}\right)\mapsto\left((\widetilde X^j)_{j=1,\cdots,N},(\widetilde M^j)_{j=1,\cdots,N}\right).
	\] 
is a contraction under Assumption \ref{Ass-1}. To this end, we denote for any two stochastic processes $H$ and $H'$ their difference by $\delta H:=H-H'$ and use again the representation $\widetilde M^i=\mathscr A^i\widetilde X^i+\mathscr B^i$. 

	Integration by parts implies for any $\epsilon>0$ that \eqref{tildeX} holds with $\widetilde X^i$ replaced by $\delta \widetilde X^i$ and without non-homogenous term.
%
	Using the fact that $\delta\widetilde X^i_{T-\epsilon}\delta\widetilde M^i_{T-\epsilon}\geq \delta\widetilde X^i_{T-\epsilon}\delta {\mathscr B}^i_{T-\epsilon},$ we have that
	\begin{equation*}
	\begin{aligned}
	&\int^{T-\epsilon}_0 2\lambda^i_t(\delta\widetilde X^i_t)^2+\frac{(\delta\widetilde M^i_t)^2}{2\eta^i_t}\,dt\\
	\leq& -\delta\widetilde X^i_{T-\epsilon}\delta {\mathscr B}^i_{T-\epsilon}+\int^{T-\epsilon}_0\delta\widetilde M^i_t\frac{\left\langle \widehat B^{i,(1)}_t,\delta\widetilde {\mathcal P}^{i}_t\right\rangle}{2N\eta^i_t}\,dt+\textrm{martingale part}\\
	&-\int^{T-\epsilon}_0\delta\widetilde X^i_t\left(\frac{1}{N}\mathbb E\left[\left\langle \widehat B^{i,(2)}_t,\delta\widetilde {\mathcal P}^{i}_t\right\rangle\right]+\left\langle\Theta,-A^i_t\delta\widetilde {\mathcal S}^i_t+K^i_t\delta\widetilde \chi_t\right\rangle\right)\,dt.
	\end{aligned}
	\end{equation*}
	Taking expectations on both sides and then letting $\epsilon\rightarrow 0,$ we obtain that
	\begin{equation*}
	\begin{aligned}
	&\mathbb E\left[\int^{T}_0 2\lambda^i_t(\delta\widetilde X^i_t)^2+\frac{(\delta\widetilde M^i_t)^2}{2\eta^i_t}\,dt\right]\\
	\leq&-\mathbb E\left[\int^{T}_0\delta\widetilde X^i_t\left(\frac{1}{N}\mathbb E\left[\left\langle \widehat B^{i,(2)}_t,\delta\widetilde {\mathcal P}^{i}_t\right\rangle\right]+\left\langle\Theta,-A^i_t\delta\widetilde {\mathcal S}^i_t+K^i_t\delta\widetilde \chi_t\right\rangle\right)\,dt\right]+\mathbb E\left[\int^{T}_0\delta\widetilde M^i_t\frac{\left\langle\widehat B^{i,(1)}_t,\delta\widetilde {\mathcal P}^{i}_t\right\rangle}{2N\eta^i_t}\,dt\right].
		\end{aligned}
	\end{equation*}
Young's inequality and the inequality $|\langle x,y\rangle|\leq |x||y|$ for any two vectors $x,y$ imply that
	\begin{equation}\label{inequality-general}
	\begin{aligned}
	&\mathbb E\left[\int^{T}_0 2\lambda^i_t(\delta\widetilde X^i_t)^2+\frac{(\delta\widetilde M^i_t)^2}{2\eta^i_t}\,dt\right]\\
	\leq&\frac{\theta_0}{2}\mathbb E\left[\int^{T}_0(\delta\widetilde X^i_t)^2\,dt\right]+\frac{\|\widehat B^{(2)}\|^2}{{2}N^2\theta_0}\mathbb E\left[\int^{T}_0|\delta\widetilde {\mathcal P}^i_t|^2\,dt\right]\\
	&+\frac{\theta_1}{2}\mathbb E\left[\int^{T}_0(\delta\widetilde X^i_t)^2\,dt\right]+\frac{\|A\|^2}{2\theta_1}\mathbb E\left[\int^{T}_0|\delta\widetilde {\mathcal S}^i_t|^2\,dt\right]\\
	&+\frac{\theta_2}{2}\mathbb E\left[\int^{T}_0(\delta\widetilde X^i_t)^2\,dt\right]+\frac{1}{2\theta_2}\mathbb E\left[\int^{T}_0\left|K^i_t\delta\widetilde  \chi_t\right|^2\,dt\right]\\
	&+\frac{\theta}{2}\mathbb E\left[\int^{T}_0\left(\frac{\delta\widetilde M^i_t}{2\eta^i_t}\right)^2\,dt\right]+\frac{\|\widehat B^{(1)}\|^2}{{2}N^2\theta}\mathbb E\left[\int^{T}_0|\delta\widetilde {\mathcal P}^{i}_t|^2\,dt\right].
	\end{aligned}
	\end{equation}
	Applying It\^o's formula for $|\delta\widetilde P^i_t|^2$, we have that
	\begin{equation*}
	\begin{aligned}
	-|\delta\widetilde {\mathcal P}^i_t|^2=&-2\int_t^T (\delta\widetilde {\mathcal P}^i_s)^\top\left(-(A^i_t)^\top\delta\widetilde {\mathcal P}^i_t+\Theta\frac{p\delta\widetilde M^i_t+\sigma \delta M^i_t-\frac{1}{N}\left\langle \widehat B^{i,(1)}_t,\delta\widetilde {\mathcal P}^{i}_t\right\rangle}{2\eta^i_t}\right)\,ds\\
	&+\int_t^T |\delta Z^{\widetilde{\mathcal P}^i}_s|^2\,ds+2\int_t^T (\delta\widetilde {\mathcal P}^i_s)^\top\delta Z^{\widetilde{\mathcal P}^i}_s\,dW_s.
	\end{aligned}
	\end{equation*}
	Recalling the condition \eqref{cond-A} and using Young's inequality $\langle x,y\rangle \leq \frac{\widetilde\rho}{2}|x|^2+\frac{1}{2\widetilde\rho}|y|^2$, we obtain that
	\begin{equation}\label{ineq-Pi-general}
	\begin{aligned}
	\mathbb E\left[\int^T_0|\delta\widetilde {\mathcal P}^i_t|^2\,dt\right]&\leq\frac{1}{\widetilde{\rho}^2}\mathbb E\left[\int^T_0\left(\frac{p\delta\widetilde M^i_t+\sigma \delta M^i_t}{2\eta^i_t}\right)^2\,dt\right].
		\end{aligned}
	\end{equation}
Using similar arguments on $|\delta\widetilde  {\mathcal S}^i_t|^2$, we get that
	\begin{equation*}
	\begin{aligned}
	|\delta\widetilde  {\mathcal S}^i_t|^2= 2\int_0^t  (\delta\widetilde  {\mathcal S}^i_s)^\top\left(-A^i_t\delta\widetilde  {\mathcal S}^i_t+K^i_t\delta\widetilde  \chi_t\right)\,ds,
	\end{aligned}
	\end{equation*}
	and 
	\begin{equation}\label{estimate:S-step2}
	\begin{aligned}
	\mathbb E\left[\int^T_0|\delta\widetilde {\mathcal S}^i_t|^2\,dt\right]
		\leq
	\frac{1}{\widehat\rho^2}\mathbb E\left[\int^T_0\left|K^i_t\delta\widetilde \chi_t\right|^2\,dt\right].
	\end{aligned}
	\end{equation}
Recalling the definition of $\widetilde\chi$ and $\widetilde\xi^j$, Remark \ref{rmk:general-to-original}, and using Young's inequality again, we have that
	\begin{align*}
&\mathbb E\left[\int^T_0\left|K^i_t\delta\widetilde \chi_t\right|^2\,dt\right]\\
\leq& (1+\theta_3)(\|K^{(1)}\|+\|K^{(3)}\|)^2\frac{1}{N}\sum_{j=1}^N\mathbb E\left[\int_0^T(\delta\widetilde\xi^j_t)^2\,dt\right] +\left(1+\frac{1}{\theta_3}\right)\|K^{(2)}\|^2\frac{1}{N}\sum_{j=1}^N\mathbb E\left[\int_0^T(p\delta\widetilde X^j_t+\sigma\delta X^j_t)^2\,dt\right]  \\
\leq&
(1+\theta_3)(\|K^{(1)}\|+\|K^{(3)}\|)^2\frac{1}{N}\sum_{j=1}^N\mathbb E\left[\int^T_0 (1+\varepsilon)\left(\frac{p\delta\widetilde M^j_t+\sigma \delta M^j_t}{2\eta^j_t}\right)^2+\left(1+\frac{1}{\varepsilon}\right)\left(\frac{\frac{1}{N}\left\langle \widehat B^{j,(1)}_t,\delta\widetilde {\mathcal P}^{j}_t\right\rangle}{2\eta^j_t}\right)^2\,dt\right]\\
&+\left(1+\frac{1}{\theta_3}\right)\|K^{(2)}\|^2\frac{1}{N}\sum_{j=1}^N\mathbb E\left[\int^T_0\left(p\delta\widetilde X^j_t+\sigma\delta X^j_t\right)^2\,dt\right].
	\end{align*}

Letting $\varepsilon:=\frac{\|\widehat B^{(1)}\|}{2N\eta_{\min}\widetilde\rho}$, from the above estimate and \eqref{ineq-Pi-general} we have that
	\begin{equation}\label{esti:K-chi}
	\begin{split}
	&\mathbb E\left[\int^T_0\left|K_t\delta\widetilde \chi_t\right|^2\,dt\right]\\
	\leq&
	(1+\theta_3)(\|K^{(1)}\|+\|K^{(3)}\|)^2\frac{1}{N}\sum_{j=1}^N\mathbb E\left[\int^T_0 \left(1+\varepsilon+\left(1+\frac{1}{\varepsilon}\right)\frac{\|\widehat B^{(1)}\|^2}{4N^2\eta_{\min}^2\widetilde\rho^2}\right)\left(\frac{p\delta\widetilde M^j_t+\sigma \delta M^j_t}{2\eta^j_t}\right)^2\,dt\right]\\
	&+\left(1+\frac{1}{\theta_3}\right)\|K^{(2)}\|^2\frac{1}{N}\sum_{j=1}^N\mathbb E\left[\int^T_0\left(p\delta\widetilde X^j_t+\sigma\delta X^j_t\right)^2\,dt\right]\\
	=&
	(1+\theta_3)(\|K{(1)}\|+\|K^{(3)}\|)^2\frac{1}{N}\sum_{j=1}^N\mathbb E\left[\int^T_0 \left(1+\frac{\|\widehat B^{(1)}\|}{2N\eta_{\min}\widetilde\rho}\right)^2\left(\frac{p\delta\widetilde M^j_t+\sigma \delta M^j_t}{2\eta^j_t}\right)^2\,dt\right]\\
	&+\left(1+\frac{1}{\theta_3}\right)\|K^{(2)}\|^2\frac{1}{N}\sum_{j=1}^N\mathbb E\left[\int^T_0\left(p\delta\widetilde X^j_t+\sigma\delta X^j_t\right)^2\,dt\right].
	\end{split}
\end{equation}

	Recalling the inequality \eqref{inequality-general}, collecting the estimates \eqref{ineq-Pi-general}-\eqref{esti:K-chi} and taking sum from 1 to $N$ on both sides we get
	\begin{align*}
	&\left(2\lambda_{\min}-\frac{\theta_0+\theta_1+\theta_2}{2}\right)\sum_{i=1}^N\mathbb E\left[\int^{T}_0 (\delta\widetilde X^i_t)^2\,dt\right]+\left(2\eta_{\min}-\frac{\theta}{2}\right)\sum_{i=1}^N\mathbb E\left[\int^{T}_0\left(\frac{\delta\widetilde M^i_t}{2\eta^i_t}\right)^2\,dt\right]\\
	\leq&\left[\frac{1}{N^2\widetilde{\rho}^2}\left(\frac{\|\widehat B^{(2)}\|^2}{2\theta_0}+\frac{\|\widehat B^{(1)}\|^2}{2\theta}\right)\right.\\
&\left.+(1+\theta_3)(\|K^{(1)}\|+\|K^{(3)}\|)^2\left(\frac{\|A\|^2}{2\theta_1\widehat\rho^2}+\frac{1}{2\theta_2}\right)\left(1+\frac{\|\widehat B^{(1)}\|}{2N\eta_{\min}\widetilde\rho}\right)^2\right]\sum_{i=1}^N\mathbb E\left[\int^T_0\left(\frac{p\delta\widetilde M^i_t+\sigma \delta M^i_t}{2\eta^i_t}\right)^2\,dt\right]\\
&+\left(1+\frac{1}{\theta_3}\right)\|K^{(2)}\|^2\left(\frac{\|A\|^2}{2\theta_1\widehat\rho^2}+\frac{1}{2\theta_2}\right)\sum_{i=1}^N\mathbb E\left[\int^T_0\left(p\delta\widetilde X^i_t+\sigma \delta X^i_t\right)^2\,dt\right]\\
	\leq&(1+\varepsilon)\left[\frac{1}{N^2\widetilde{\rho}^2}\left(\frac{\|\widehat B^{(2)}\|^2}{2\theta_0}+\frac{\|\widehat B^{(1)}\|^2}{2\theta}\right)\right.\\
	&\left.+(1+\theta_3)(\|K^{(1)}\|+\|K^{(3)}\|)^2\left(\frac{\|A\|^2}{2\theta_1\widehat\rho^2}+\frac{1}{2\theta_2}\right)\left(1+\frac{\|\widehat B^{(1)}\|}{2N\eta_{\min}\widetilde\rho}\right)^2\right]\sum_{i=1}^N\mathbb E\left[\int^T_0\left(\frac{\delta\widetilde M^i_t}{2\eta^i_t}\right)^2\,dt\right]\\
&+(1+\varepsilon)\left(1+\frac{1}{\theta_3}\right)\|K^{(2)}\|^2\left(\frac{\|A\|^2}{2\theta_1\widehat\rho^2}+\frac{1}{2\theta_2}\right)\sum_{i=1}^N\mathbb E\left[\int^T_0\left(\delta\widetilde X^i_t\right)^2\,dt\right]\\
&+C\left(1+\frac{1}{\varepsilon}\right)\sigma\left(\sum_{i=1}^N\mathbb E\left[\int^T_0\left(\delta M^i_t\right)^2\,dt\right]+\sum_{i=1}^N\mathbb E\left[\int^T_0\left(\delta X^i_t\right)^2\,dt\right]\right).
	\end{align*}

	Thus, choosing $\theta=\frac{\|\widehat B^{(1)}\|}{N\widetilde\rho}$ and choosing $\varepsilon$ small enough, the assumption \eqref{ass-1} yields
	\begin{equation*}
	\begin{aligned}
	&\sum_{i=1}^N\mathbb E\left[\int^{T}_0\left(\delta\widetilde M^i_t\right)^2\,dt\right]+\sum_{i=1}^N\mathbb E\left[\int^T_0\left(\delta\widetilde X^i\right)^2\,dt\right]\\
	&\leq C\sigma\left(\sum_{i=1}^N\mathbb E\left[\int^T_0\left(\delta M^i_t\right)^2\,dt\right]+\sum_{i=1}^N\mathbb E\left[\int^T_0\left(\delta X^i_t\right)^2\,dt\right]\right).
\end{aligned}
\end{equation*}
Furthermore, going back to the dynamics of $\widetilde X^i$ and using $\widetilde M^i=\mathscr A^i\widetilde X^i+\mathscr B^i$, we have that
	\begin{equation*}
\begin{aligned}
\sum_{i=1}^N\mathbb E\left[\sup_{0\leq t\leq T}\left|\frac{\delta\widetilde X^i}{(T-t)^a}\right|^2\right]\leq C\sigma\left(\sum_{i=1}^N\mathbb E\left[\int^T_0\left(\delta M^i_t\right)^2\,dt\right]+\sum_{i=1}^N\mathbb E\left[\int^T_0\left(\delta X^i_t\right)^2\,dt\right]\right).
\end{aligned}
\end{equation*}
Hence, when $\sigma$ is small enough, the mapping $\Phi$ is a contraction. Iterating $p$ finitely many times until $p=1$ and letting $f^i=g^i=0$, we obtain the desired result.  
\end{proof}

%


\subsection{Verification}\label{sec:verification}

Having established the existence of a unique solution to the respective FBSDEs, the candidate optimal strategies are well defined. In this section we provide a verification result that shows that the candidate strategy \eqref{xi-feedback} does indeed define a Nash equilibrium of the $N$-player game \eqref{control-problem3}-\eqref{state-dynamics3}. Our analysis is based on a novel sufficient stochastic maximum principle that does not require convexity of the cost function as it is usually the case; see e.g. \cite[Theorem 6.4.6]{Pham}. Instead, our argument strongly relies on the liquidation constraint $X^i_T=0$. The following is the main result of this section. 

\begin{theorem}\label{thm:verification}
Let $(X^i,\mathcal S^i,M^i,\mathcal P^i,Z^{M^i},Z^{\mathcal P^i})\in \mathcal H_{a,\mathcal F}\times S^{2}_{\mathcal F}\times L^{2}_{\mathcal F}\times\mathcal H_{\iota,\mathcal F}\times L^{2,-}_{\mathcal F}\times L^{2}_{\mathcal F}$ $(i=1,\cdots, N)$ be the unique solution to the FBSDE system \eqref{NP-fbsde}. Under Assumption \ref{Ass-1} with the inequalities in (iii) replaced by the following slightly stronger condition
\begin{equation}\label{stronger-ass}
	\left\{\begin{split}
	&~\lambda_{\min}-\frac{\theta_1+\theta_2}{2}-\frac{\|B^{(2)}\|^2}{N^2}\left(1+\frac{1}{\theta_3} \right)\left(\frac{\|A\|^2}{2\theta_1\widehat\rho^2}+\frac{1}{2\theta_2}\right)\geq 0,\\
	&~ \eta_{\min}-(1+\theta_3)\frac{\|B^{(1)}\|^2}{N^2} \left(\frac{\|A\|^2}{2\theta_1\widehat\rho^2}+\frac{1}{2\theta_2}\right)\geq 0,
	\end{split}\right.
\end{equation}
the processes $\xi^{*}=(\xi^{*,1},\cdots,\xi^{*,N})$ forms a unique open-loop Nash equilibrium of the $N$-player game \eqref{control-problem3}-\eqref{state-dynamics3}, where
\begin{equation*}
\xi^{*,i}=\frac{M^{i}-\frac{1}{N}\left\langle B^{i,(1)},\mathcal P^{i}\right\rangle}{2\eta^i}.
\end{equation*}
\end{theorem}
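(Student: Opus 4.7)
The plan is to consider, for fixed $i$, an arbitrary admissible deviation $\xi^i$ by player $i$ while the remaining players use $\xi^{*,-i}$. Writing $\delta\xi^i := \xi^i - \xi^{*,i}$, and similarly $\delta X^i, \delta Y^i, \delta \mathcal{S}^i$, the liquidation constraint applied to both $X^{*,i}$ and $X^i$ yields the two structural identities $\delta X^i_0 = \delta X^i_T = 0$ and $\int_0^T \delta\xi^i_t\,dt = 0$. Because only player $i$ deviates, the mean-field inputs scale as $\delta\bar\xi = \delta\xi^i/N$ and $\mathbb{E}[\delta\bar X] = \mathbb{E}[\delta X^i]/N$, so $\delta\underline\chi = \tfrac{1}{N}(\delta\xi^i, \mathbb{E}[\delta X^i])^\top$; this is the source of the weak-interaction $1/N$ factors in \eqref{stronger-ass}. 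Expanding $J^i(\xi^i) - J^i(\xi^{*,i})$ then splits naturally into a linear part $L_i(\delta)$ and a quadratic part
\[
Q_i(\delta) = \mathbb{E}\int_0^T \Big(\eta^i (\delta\xi^i)^2 + \delta\xi^i\,\delta Y^i + \lambda^i (\delta X^i)^2\Big)\,dt.
\]

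The first step is to show $L_i(\delta) = 0$. I would apply It\^o's formula to $M^i \delta X^i$ on $[0, T-\epsilon]$ and to $\langle \mathcal{P}^i, \delta\mathcal{S}^i\rangle$ on $[0,T]$. Using the adjoint BSDEs in \eqref{NP-fbsde}, substituting the first-order condition $2\eta^i \xi^{*,i} = M^i - \tfrac{1}{N}\langle B^{i,(1)}, \mathcal{P}^i\rangle$, and passing to the limit $\epsilon\to 0$ (justified by $\delta X^i_T = 0$, $\mathcal{P}^i_T = 0$, and the regularity $M^i \in L^2_{\mathcal{F}}$, $X^{*,i}, \delta X^i \in \mathcal{H}_{a,\mathcal{F}}$, $\mathcal{P}^i \in \mathcal{H}_{\iota,\mathcal{F}}$ from Theorem \ref{fbsde-thm}) produce expressions for $\mathbb{E}\int_0^T M^i \delta\xi^i\,dt$ and $\mathbb{E}\int_0^T \xi^{*,i}\,\delta Y^i\,dt$ in terms of $\delta X^i$ and $\delta\underline\chi$, respectively. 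A further pathwise integration by parts on $\int_0^T \delta X^i\,dY^{*,i}$ (using $\delta X^i_0 = \delta X^i_T = 0$) converts the $2\lambda^i X^{*,i}\delta X^i$ contribution into $-Y^{*,i}\delta\xi^i$ and cancels the $Y^{*,i}\delta\xi^i$ term from the cost expansion; the $\tfrac{1}{N}\langle B^{i,(1)}, \mathcal{P}^i\rangle\delta\xi^i$ and $\tfrac{1}{N}\mathbb{E}[\langle B^{i,(2)}, \mathcal{P}^i\rangle]\delta X^i$ contributions cancel pairwise, the latter using that $\mathbb{E}[\delta X^i]$ is deterministic together with Fubini.

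The second step is to bound the quadratic part. The only sign-indefinite contribution is $\delta\xi^i\delta Y^i$, which I would control via the forward ODE $d\delta\mathcal{S}^i = (-A^i\delta\mathcal{S}^i + B^i\delta\underline\chi)\,dt$, $\delta\mathcal{S}^i_0 = 0$, together with the coercivity in Assumption \ref{Ass-1}(ii). This yields
\[
\mathbb{E}\int_0^T |\delta\mathcal{S}^i|^2\,dt \leq \frac{1}{\widehat\rho^2}\mathbb{E}\int_0^T |B^i\delta\underline\chi|^2\,dt \leq \frac{C}{N^2\widehat\rho^2}\mathbb{E}\int_0^T\Big(\|B^{(1)}\|^2 (\delta\xi^i)^2 + \|B^{(2)}\|^2 (\delta X^i)^2\Big)dt.
\]
Bounding $\delta\xi^i\delta Y^i$ by Young's inequality with parameters $\theta_1,\theta_2,\theta_3$, in the same spirit as Step 2 of the proof of Theorem \ref{fbsde-thm}, the strengthened condition \eqref{stronger-ass} is precisely what makes the residual coefficients of $\mathbb{E}\int(\delta\xi^i)^2\,dt$ and $\mathbb{E}\int(\delta X^i)^2\,dt$ non-negative, so $Q_i(\delta) \geq 0$, with strict positivity whenever $\delta\xi^i \not\equiv 0$. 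This establishes the equilibrium property. Uniqueness follows because any open-loop Nash equilibrium must, via the first-order necessary conditions of a minimum, correspond to a solution of the FBSDE system \eqref{NP-fbsde}, which is uniquely solvable by Theorem \ref{fbsde-thm}.

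The principal obstacle is the lack of convexity: the Hessian of the running cost in $(\xi^i, Y^i)$ has signature $(+,-)$ on account of the bilinear term $\xi^i Y^i$, so standard convexity-based verification theorems such as \cite[Theorem 6.4.6]{Pham} do not apply. The workaround has to use the terminal state constraint $\delta X^i_T = 0$ twice: first, to kill the boundary term $\mathbb{E}[M^i_{T-\epsilon}\delta X^i_{T-\epsilon}]$ arising in the It\^o computation for $M^i\delta X^i$, and second, in combination with the mean-field $1/N$ scaling of $\delta\underline\chi$, to convert the otherwise indefinite cross term $\delta\xi^i\delta Y^i$ into a quantity controllable by the diagonal quadratic terms. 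A subsidiary technical point is the justification of the It\^o/integration-by-parts steps near the singular terminal time where $\mathscr{A}^i \uparrow \infty$; this is handled by the $[0,T-\epsilon]$ localization together with the weighted regularity provided by Theorem \ref{fbsde-thm}.
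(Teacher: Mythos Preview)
Your proposal is correct and follows essentially the same route as the paper. Both arguments hinge on the two integrations by parts --- one for $M^i\,\delta X^i$ on $[0,T-\epsilon]$ (using the liquidation constraint to kill the boundary term in the limit) and one for $\langle\mathcal P^i,\delta\underline{\mathcal S}^i\rangle$ on $[0,T]$ --- to annihilate the first-order variation, followed by the coercivity estimate $\int|\delta\underline{\mathcal S}^i|^2\le\widehat\rho^{-2}\int|B^i\delta\underline\chi|^2$ and Young's inequality for the second-order part.

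One presentational difference: the paper first rewrites the cross-term $\int\delta\xi^i\,\delta Y^i$ as $\int\delta X^i\,\langle\Theta,-A^i\delta\underline{\mathcal S}^i+B^i\delta\underline\chi\rangle$ via a further integration by parts (this is its Lemma~\ref{lem:rewritten-cost} applied to the difference), and it is \emph{that} form which, after splitting $\delta X^i\cdot\langle\Theta,A^i\delta\underline{\mathcal S}^i\rangle$ and $\delta X^i\cdot\langle\Theta,B^i\delta\underline\chi\rangle$ with weights $\theta_1,\theta_2$, produces exactly the constants $\frac{\|A\|^2}{2\theta_1\widehat\rho^2}+\frac{1}{2\theta_2}$ in \eqref{stronger-ass}. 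Bounding $\delta\xi^i\,\delta Y^i$ directly (Young on $\delta\xi^i$ against $\delta Y^i$, then $|\delta Y^i|^2\le|\delta\underline{\mathcal S}^i|^2$) would yield a sufficient condition without the $\|A\|^2$ factor and hence not literally \eqref{stronger-ass}. So to make your claim that \eqref{stronger-ass} is ``precisely'' the right condition, insert this extra integration by parts before estimating; otherwise everything goes through as you describe.
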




\begin{remark}
\begin{itemize}
	\item[(1)] If Assumption \ref{Ass-1} (iii) holds, then the condition \eqref{stronger-ass} holds for all $N\geq 2$.
	\item[(2)] The impact process $Y$ is exogenous in the optimization problem of the MFG. Thus, the convexity requirement for the standard sufficient maximum principle holds. We omit the proof of the verification result, which is standard.
\end{itemize}
\end{remark}

In what follows we denote by $(X^i,\underline{\mathcal S}^i)$ the states corresponding to the strategy profile $(\xi^i,(\xi^{*,j})_{j\neq i})$ and by $(X^{*,i},\underline{\mathcal S}^{*,i})$ the states corresponding to the strategy profile $(\xi^{*,i},(\xi^{*,j})_{j\neq i})$. Moreover, we put 
\[
	\underline\chi:=\frac{1}{N}(\xi^i,\mathbb E[X^i] )^\top+\frac{1}{N}\sum_{j\neq i}(\xi^{*,j},\mathbb E[X^{*,j}])^\top  \quad \mbox{and} \quad 
	\underline\chi^*:=\frac{1}{N}\sum_{j=1}^N(\xi^{*,j},\mathbb E[X^{*,j}])^\top.
\]
Then it holds that
	\begin{equation*}
		\left\{\begin{split}
			dX^i_t=&~-\xi^i_t\,dt\\
			d\underline{\mathcal S}^i_t=&~-A^i_t\underline{\mathcal S}^i_t+B^i_t\underline{\chi}_t+\underline{\mathcal R}^i_t\,dt  
		\end{split}\right.
		\quad \mbox{and} \quad
		\left\{\begin{split}
		dX^{*,i}_t=&~-\xi^{*,i}_t\,dt\\
		d\underline{\mathcal S}^{*,i}_t=&~-A^i_t\underline{\mathcal S}^{*,i}_t+B^i_t\underline{\chi}^*_t+\underline{\mathcal R}^i_t\,dt.
		\end{split}\right.
	\end{equation*}

The admissibility of the candidate $\xi^*$ has already been established; in particular, $X^{*,i}_T=0$ for each $i=1,\ldots,N$ because $X^{*,i}\in\mathcal H_{\alpha,\mathcal F}$. It remains to prove that 
\begin{equation*}
	J^i(\xi^{*,i},\xi^{*,-i})\leq J^i(\xi^i,\xi^{*,-i})
\end{equation*}
for each $1\leq i\leq N$ and any admissible control $\xi^i$. To this end, we prove that the cost $J^i(\xi^{i},\xi^{*,-i})$ can be decomposed into the equilibrium cost plus  the cost of a round-trip strategy as  
\begin{equation} \label{cost-decomposition}
\begin{split}
	J^i(\xi^i,\xi^{*,-i}) & = J^i(\xi^{*,i},\xi^{*,-i}) + \mathbb E\left[\int_0^T \eta^i_t\left(\xi^i_t-\xi^{*,i}_t\right)^2+\lambda^i_t\left(X^i_t-X^{*,i}_t\right)^2 \right.\\
	& \qquad \qquad \qquad \qquad \qquad  \left. +\left(X^i_t-X^{*,i}_t\right) \left\langle\Theta,-A^i_t(\underline{\mathcal S}^i_t-\underline{\mathcal S}^{*,i}_t)+B^i_t(\underline\chi_t-\underline\chi^*_t)\right\rangle\,dt\right]
\end{split}
\end{equation}
and that the additional cost is non-negative under Assumption \eqref{stronger-ass}. As a byproduct of our verification result we thus obtain that round-trips are costly in equilibrium. 

In order to prove the decomposition \eqref{cost-decomposition} we proceed in various steps. In a first step, we establish an alternative representation of the cost function. 

\begin{lemma}\label{lem:rewritten-cost}
The cost associated with the strategy $(\xi^i,\xi^{*,-i})$ can be rewritten as
\begin{equation*}
\begin{aligned}
J^i(\xi^i,\xi^{*,-i})
=&\mathbb E\left[\int_0^T X^i_t\left\langle \Theta,-A^i_t\underline{\mathcal S}^i_t+B^i_t\underline\chi_t+\underline{\mathcal R}^i_t\right\rangle+\eta^i_t(\xi^i_t)^2+\lambda^i_t(X^i_t)^2\,dt\right].
\end{aligned}
\end{equation*}
\end{lemma}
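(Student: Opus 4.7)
The plan is to exploit the liquidation constraint $X^i_T = 0$ and the initial condition $Y^i_0 = 0$ via an integration by parts applied to the product $X^i_t Y^i_t$, where $Y^i$ is the first component of the two-dimensional state $\underline{\mathcal S}^i$. Observe that with $\Theta = (1,0)^\top$ the inner product $\langle \Theta, -A^i_t \underline{\mathcal S}^i_t + B^i_t \underline\chi_t + \underline{\mathcal R}^i_t \rangle$ is precisely the drift of $Y^i$, so the right-hand side of the target identity can be rewritten as $\mathbb E[\int_0^T X^i_t \, dY^i_t + \eta^i_t(\xi^i_t)^2 + \lambda^i_t(X^i_t)^2 \, dt]$. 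Matching this against the definition of $J^i$ reduces the claim to the identity
\begin{equation*}
\mathbb E\left[\int_0^T \xi^i_t Y^i_t \, dt\right] = \mathbb E\left[\int_0^T X^i_t \, dY^i_t\right].
\end{equation*}

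First, I would apply classical integration by parts to the process $X^i Y^i$. Since both $X^i$ and $Y^i$ are absolutely continuous (they are driven by ODE dynamics for each $\omega$, with $dX^i_t = -\xi^i_t \, dt$ and $dY^i_t$ equal to the first component of the $\underline{\mathcal S}^i$ drift), there is no quadratic covariation term and the product rule gives pathwise
\begin{equation*}
X^i_T Y^i_T - X^i_0 Y^i_0 = \int_0^T X^i_t \, dY^i_t - \int_0^T Y^i_t \xi^i_t \, dt.
\end{equation*}
The boundary terms vanish: $Y^i_0 = 0$ by definition and $X^i_T = 0$ because $\xi^i$ is admissible. Taking expectations and rearranging yields the desired identity, completing the proof by substitution into the formula for $J^i$.

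The only subtle point is justifying that the boundary evaluation at $T$ can be performed directly rather than via a limit along $T-\epsilon \to T$. Since $\xi^i$ is admissible, the constraint $X^i_T = 0$ holds and $X^i$ is continuous on $[0,T]$; together with the essential boundedness of the coefficients in the dynamics of $\underline{\mathcal S}^i$ and the square-integrability of $\xi^i$ and $\underline\chi$, the process $Y^i$ has integrable trajectories, so one can either take $\epsilon \to 0$ in the integration-by-parts identity on $[0, T-\epsilon]$ using dominated convergence, or apply the product rule directly on $[0,T]$. Either way, this is the main (and only) technical step; the rest is algebraic rearrangement.
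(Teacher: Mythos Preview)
Your proposal is correct and is essentially identical to the paper's proof: the paper applies integration by parts to $X^i_t\langle\Theta,\underline{\mathcal S}^i_t\rangle$, which is exactly $X^i_t Y^i_t$, uses the boundary conditions $X^i_T=0$ and $\underline{\mathcal S}^i_0=0$ to kill the endpoint terms, and then substitutes the resulting identity into the definition of $J^i$. Your observation that $\langle\Theta,-A^i_t\underline{\mathcal S}^i_t+B^i_t\underline\chi_t+\underline{\mathcal R}^i_t\rangle$ is the drift of $Y^i$ is precisely the content of the paper's computation.
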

\begin{proof}
Using integration by parts and $X^i_T=0, \underline{\mathcal S}^i_0=0$, we have that
\begin{equation}\label{integration-by-part-1}
\begin{aligned}
0
=\mathbb E\left[ X^i_T\left\langle \Theta,\underline{\mathcal S}^i_T\right\rangle-X^i_0\left\langle \Theta,\underline{\mathcal S}^i_0\right\rangle\right]
=\mathbb E\left[\int^T_0 X^i_t\left\langle \Theta,-A^i_t\underline{\mathcal S}^i_t+B^i_t\underline\chi_t+\underline{\mathcal R}^i_t\right\rangle-\xi^i_t\left\langle \Theta,\underline{\mathcal S}^i_t\right\rangle\,dt\right].
\end{aligned}
\end{equation}
As a result, 
\begin{equation*}
\begin{aligned}
J^i(\xi^i,\xi^{*,-i})
=&\mathbb E\left[\int_0^T\xi^i_t\left\langle\Theta,\underline{\mathcal S}^i_t\right\rangle+\eta^i_t(\xi^i_t)^2+\lambda^i_t(X^i_t)^2\,dt\right]\\
=&\mathbb E\left[\int_0^T X^i_t\left\langle \Theta,-A^i_t\underline{\mathcal S}^i_t+B^i_t\underline\chi_t+\underline{\mathcal R}^i_t\right\rangle+\eta^i_t(\xi^i_t)^2+\lambda^i_t(X^i_t)^2\,dt\right].
\end{aligned}
\end{equation*}
\end{proof}

In view of Lemma \ref{lem:rewritten-cost}, it holds
\begin{equation}\label{def-i}
\begin{split}
	 &~ J^i(\xi^i,\xi^{*,-i})-J^i(\xi^{*,i},\xi^{*,-i}) \\
	~~ = & ~ \mathbb E\left[\int_0^T X^i_t\left\langle \Theta,-A^i_t\underline{\mathcal S}^i_t+B^i_t\underline\chi_t+\underline{\mathcal R}^i_t\right\rangle+		\eta^i_t(\xi^i_t)^2+\lambda^i_t(X^i_t)^2\,dt\right]\\
&~-\mathbb E\left[\int_0^T X^{*,i}_t\left\langle \Theta,-A^i_t\underline{\mathcal S}^{*,i}_t+B^i_t\underline\chi^*_t+\underline{\mathcal R}^i_t\right\rangle +\eta^i_t(\xi^{*,i}_t)^2+\lambda^i_t(X^{*,i}_t)^2\,dt \right]\\
 =:&~ \mathbb{I}.
\end{split}
\end{equation}
It remains to bring the term on the right-hand side in equation \eqref{def-i} into the form \eqref{cost-decomposition}. For thus, let 
\begin{equation}\label{eq:II}
\begin{split}
	\mathbb{II}:=&\mathbb E\left[\int_0^T\left(2\eta^i_t\xi^{*,i}_t+\frac{1}{N}\left\langle B^{i,(1)}_t,\mathcal P^{i}_t\right\rangle\right)\left(\xi^i_t-\xi^{*,i}_t\right)\,dt\right]\\
	& +\mathbb E \left[ \int_0^T \left(X^i_t-X^{*,i}_t\right) \left(2\lambda^i_tX^{*,i}_t+\frac{1}{N}\mathbb E\left[\left\langle B^{i,(2)}_t ,\mathcal P^i_t  \right\rangle\right]+\left\langle\Theta,-A^i_t\underline{\mathcal S}^{*,i}_t+B^i_t\underline\chi^*_t+\underline{\mathcal R}^i_t\right\rangle\right)\, dt \right].
\end{split}
\end{equation}
Heuristically, this term equals $\mathbb E \left[ \int_0^T \left((X^i_t - X^{*,i}_t )dM^i_t  - M^i_t (dX^{i}_t - dX^{*,i}_t) \right) \right]$. In view of the liquidation constraint, using an integration by parts argument, we expect that $\mathbb{II}=0$ in which case it remains to bring the difference $\mathbb{I}-\mathbb{II}$ into the form \eqref{cost-decomposition}. 

\begin{lemma}
	The representation \eqref{cost-decomposition} holds true.  
\end{lemma}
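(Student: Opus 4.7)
The plan is to verify the two ingredients already suggested by the excerpt: first to confirm rigorously that $\mathbb{II}=0$, and second to show that $\mathbb{I}-\mathbb{II}$ collapses algebraically to the right-hand side of \eqref{cost-decomposition}. Since \eqref{def-i} combined with Lemma \ref{lem:rewritten-cost} identifies $\mathbb{I}$ with $J^i(\xi^i,\xi^{*,-i})-J^i(\xi^{*,i},\xi^{*,-i})$, the claimed decomposition follows once these two steps are in place.

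For $\mathbb{II}=0$, the key structural observation is that the definition $\xi^{*,i}=(M^i-\frac{1}{N}\langle B^{i,(1)},\mathcal P^i\rangle)/(2\eta^i)$ makes the prefactor of $(\xi^i_t-\xi^{*,i}_t)$ in \eqref{eq:II} equal to $M^i_t$, while by the backward equation in \eqref{NP-fbsde-2} the prefactor of $(X^i_t-X^{*,i}_t)$ is precisely the $dt$-drift of $-dM^i_t$ (evaluated at the equilibrium state $X^{*,i},\underline{\mathcal S}^{*,i},\underline\chi^*$ that is plugged into the adjoint system). I will therefore apply It\^o's formula to $(X^i_t-X^{*,i}_t)M^i_t$ on $[0,T-\epsilon]$, use $d(X^i_t-X^{*,i}_t)=-(\xi^i_t-\xi^{*,i}_t)\,dt$, take expectations to discard the $Z^{M^i}$-martingale part, and observe that the initial boundary term vanishes by $X^i_0=X^{*,i}_0=\mathcal X^i$ while the terminal boundary term vanishes as $\epsilon\to 0$ thanks to $X^i_T=X^{*,i}_T=0$ combined with the integrability $M^i\in L^2_{\mathcal F}$ and $X^i,X^{*,i}\in\mathcal H_{a,\mathcal F}$ supplied by Theorem \ref{fbsde-thm}.

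For the algebraic step I will expand $\mathbb{I}-\mathbb{II}$ and compare term by term to the right-hand side of \eqref{cost-decomposition}. The quadratic pieces collapse via $a^2-b^2-2b(a-b)=(a-b)^2$ against the $2\eta^i\xi^{*,i}(\xi^i-\xi^{*,i})$ and $2\lambda^i X^{*,i}(X^i-X^{*,i})$ contributions of $\mathbb{II}$, producing $\eta^i(\xi^i-\xi^{*,i})^2$ and $\lambda^i(X^i-X^{*,i})^2$ as required. After the rewriting $\mathbb{I}_{\mathrm{lin}}=X^i\langle\Theta,-A^i\delta\underline{\mathcal S}^i+B^i(\underline\chi-\underline\chi^*)\rangle+(X^i-X^{*,i})\langle\Theta,-A^i\underline{\mathcal S}^{*,i}+B^i\underline\chi^*+\underline{\mathcal R}^i\rangle$, with $\delta\underline{\mathcal S}^i:=\underline{\mathcal S}^i-\underline{\mathcal S}^{*,i}$, the leftover residual reduces to
\begin{equation*}
\mathbb E\left[\int_0^T X^{*,i}_t\langle\Theta,-A^i_t\delta\underline{\mathcal S}^i_t+B^i_t(\underline\chi_t-\underline\chi^*_t)\rangle-\frac{1}{N}\langle B^{i,(1)}_t,\mathcal P^i_t\rangle(\xi^i_t-\xi^{*,i}_t)-(X^i_t-X^{*,i}_t)\frac{1}{N}\mathbb E[\langle B^{i,(2)}_t,\mathcal P^i_t\rangle]\,dt\right].
\end{equation*}
To show this vanishes I invoke two integration-by-parts identities. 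Since $d\delta\underline{\mathcal S}^i=(-A^i\delta\underline{\mathcal S}^i+B^i(\underline\chi-\underline\chi^*))\,dt$ with $\delta\underline{\mathcal S}^i_0=0$ and $X^{*,i}_T=0$, the product rule for $X^{*,i}\langle\Theta,\delta\underline{\mathcal S}^i\rangle$ yields $\mathbb E[\int_0^T X^{*,i}_t\langle\Theta,-A^i_t\delta\underline{\mathcal S}^i_t+B^i_t(\underline\chi_t-\underline\chi^*_t)\rangle dt]=\mathbb E[\int_0^T\xi^{*,i}_t\langle\Theta,\delta\underline{\mathcal S}^i_t\rangle dt]$. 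Next, applying It\^o to $\langle\mathcal P^i,\delta\underline{\mathcal S}^i\rangle$ with the adjoint equation $-d\mathcal P^i=(-(A^i)^\top\mathcal P^i+\Theta\xi^{*,i})\,dt-Z^{\mathcal P^i}dW$ and the boundary data $\mathcal P^i_T=0$, $\delta\underline{\mathcal S}^i_0=0$, the $(A^i)^\top\mathcal P^i$ and $A^i\delta\underline{\mathcal S}^i$ terms cancel, leaving $\mathbb E[\int_0^T\xi^{*,i}_t\langle\Theta,\delta\underline{\mathcal S}^i_t\rangle dt]=\mathbb E[\int_0^T\langle\mathcal P^i_t,B^i_t(\underline\chi_t-\underline\chi^*_t)\rangle dt]$. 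Decomposing $B^i_t(\underline\chi_t-\underline\chi^*_t)=\frac{1}{N}[B^{i,(1)}_t(\xi^i_t-\xi^{*,i}_t)+B^{i,(2)}_t\mathbb E[X^i_t-X^{*,i}_t]]$ and pulling the deterministic $\mathbb E[X^i_t-X^{*,i}_t]$ inside then matches the two $\mathcal P^i$-terms in the residual exactly, completing the cancellation.

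The main obstacle is the rigorous justification of the product-rule and It\^o identities above in the presence of the singular near-terminal behaviour of $M^i$ and $\mathcal P^i$, which is only controlled through the weighted norms $\|\cdot\|_a$, $\|\cdot\|_\iota$ and the non-uniform $L^2$-estimates furnished by Theorem \ref{fbsde-thm}. The standard remedy, already used in the proof of that theorem, is to localize on $[0,T-\epsilon]$, estimate the $\epsilon$-boundary terms via Cauchy-Schwarz together with $X^{*,i}\in\mathcal H_{a,\mathcal F}$ (so that $X^{*,i}_{T-\epsilon}\to 0$ at a rate compatible with the blow-up of $M^i$ and $\mathcal P^i$), and then pass $\epsilon\to 0$ by dominated convergence.
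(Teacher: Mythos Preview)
Your proposal is correct and follows essentially the same approach as the paper's own proof: both establish $\mathbb{II}=0$ via integration by parts on $M^i(X^i-X^{*,i})$ on $[0,T-\epsilon]$, compute $\mathbb{I}-\mathbb{II}$ to obtain the expression in \eqref{diff-i}, and then eliminate the residual $\mathcal P^i$-terms through the two integration-by-parts identities for $X^{*,i}\langle\Theta,\delta\underline{\mathcal S}^i\rangle$ and $\langle\mathcal P^i,\delta\underline{\mathcal S}^i\rangle$, yielding exactly \eqref{ps-int}. One small imprecision: you invoke $X^i\in\mathcal H_{a,\mathcal F}$ from Theorem~\ref{fbsde-thm}, but that theorem only supplies this for the equilibrium process $X^{*,i}$; for a generic admissible $\xi^i\in L^2_{\mathcal F}$ with $X^i_T=0$ one instead uses $|X^i_{T-\epsilon}|\le\int_{T-\epsilon}^T|\xi^i_s|\,ds$, and the paper handles the limit $\mathbb E[M^i_{T-\epsilon}(X^i_{T-\epsilon}-X^{*,i}_{T-\epsilon})]\to 0$ by citing \cite[Proposition~2.14]{FGHP-2018}.
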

\begin{proof}
We proceed in two steps. In a first step, we prove that $\mathbb{II}=0$. Indeed, integration by parts on $[0,T-\epsilon]$ yields that
\begin{equation*}
\begin{split}
&\mathbb E\left[M^i_{T-\epsilon}\left(X^i_{T-\epsilon} - X^{*,i}_{T-\epsilon}\right)-M^i_{0}\left(X^i_{0}-X^{*,i}_{0}\right)\right]\\
=&-\mathbb E\left[\int^{T-\epsilon}_0 M^i_t\left(\xi^i_t-\xi^{*,i}_t\right)\,dt\right]\\
& -\mathbb E\left[\int^{T-\epsilon}_0 \left(X^i_t-X^{*,i}_t\right) \left(2\lambda^i_tX^{*,i}_t+\frac{1}{N}\mathbb E\left[\left\langle B^{i,(2)}_t,\mathcal P^i_t\right\rangle\right] +\left\langle\Theta,-A^i_t\underline{\mathcal S}^{*,i}_t+B^i_t\underline\chi^*_t+\underline{\mathcal R}^i_t\right\rangle\right)\,dt\right].
\end{split}
\end{equation*}
Letting $\epsilon \rightarrow 0$, a similar argument as in the proof of \cite[Proposition 2.14]{FGHP-2018} yields that
$$\lim\limits_{\epsilon\rightarrow 0}\mathbb E\left[M^i_{T-\epsilon}(X^i_{T-\epsilon}-X^{*,i}_{T-\epsilon})\right]=0.$$
Thus, dominated convergence implies
\begin{equation*}\label{mx-int}
\begin{aligned}
&-\mathbb E\left[\int^{T}_0 M^i_t\left(\xi^i_t-\xi^{*,i}_t\right)\,dt\right]\\
=&\mathbb E\left[\int^{T}_0 \left(X^i_t-X^{*,i}_t\right) \left(2\lambda^i_tX^{*,i}_t+\frac{1}{N}\mathbb E\left[\left\langle  B^{i,(2)}_t,\mathcal P^i_t\right\rangle\right] +\left\langle\Theta,-A^i_t\underline{\mathcal S}^{*,i}_t+B^i_t\underline\chi^*_t+\underline{\mathcal R}^i_t\right\rangle\right)\,dt\right].
\end{aligned}
\end{equation*}
Putting the preceding equation into \eqref{eq:II} implies that
\begin{equation*}
\begin{aligned}
	\mathbb{II}=&\mathbb E\left[\int_0^T\left(2\eta^i_t\xi^{*,i}_t+\frac{1}{N}\left\langle B^{i,(1)}_t,\mathcal P^{i}_t\right\rangle-M^i_t\right)\left(\xi^i_t-\xi^{*,i}_t\right)\,dt\right]=0.
\end{aligned}
\end{equation*}
Using integration by parts again yields that 
\begin{equation*}
\begin{aligned}
0=&\mathbb E\left[\left\langle \mathcal P^i_T,\underline{\mathcal S}^i_T-\underline{\mathcal S}^{*,i}_T\right\rangle-\left\langle \mathcal P^i_0,\underline{\mathcal S}^i_0-\underline{\mathcal S}^{*,i}_0\right\rangle\right]\\
=&\mathbb E\left[\int^T_0\left\langle (A^i_t)^\top \mathcal P^i_t-\Theta\xi^{*,i}_t,\underline{\mathcal S}^i_t-\underline{\mathcal S}^{*,i}_t\right\rangle+\left\langle \mathcal P^i_t,-A^i_t(\underline{\mathcal S}^i_t-\underline{\mathcal S}^{*,i}_t)+B^i_t(\underline\chi_t-\underline\chi^*_t)\right\rangle\,dt\right]\\
=&\mathbb E\left[\int^T_0-\xi^{*,i}_t\left\langle\Theta,\underline{\mathcal S}^i_t-\underline{\mathcal S}^{*,i}_t\right\rangle+\left\langle \mathcal P^i_t,B^i_t(\underline\chi_t-\underline\chi^*_t)\right\rangle\,dt\right]\\
=&\mathbb E\left[\int^T_0 -X^{*,i}_t\left\langle\Theta,-A^i_t(\underline{\mathcal S}^i_t-\underline{\mathcal S}^{*,i}_t)+B^i_t(\underline\chi_t-\underline\chi^*_t)\right\rangle\right]+\mathbb E\left[X^{*,i}_T\left\langle\Theta,\underline{\mathcal S}^i_T-\underline{\mathcal S}^{*,i}_T\right\rangle-X^{*,i}_0\left\langle\Theta,\underline{\mathcal S}^i_0-\underline{\mathcal S}^{*,i}_0\right\rangle\right]\\
&\qquad +\mathbb E\left[\int^T_0\left\langle \mathcal P^i_t,B^i_t(\underline\chi_t-\underline\chi^*_t)\right\rangle\,dt\right]\\
=&\mathbb E\left[\int^T_0 -X^{*,i}_t\left\langle\Theta,-A^i_t(\underline{\mathcal S}^i_t-\underline{\mathcal S}^{*,i}_t)+B^i_t(\underline\chi_t-\underline\chi^*_t)\right\rangle\right]+\mathbb E\left[\int^T_0\left\langle \mathcal P^i_t,B^i_t(\underline\chi_t-\underline\chi^*_t)\right\rangle\,dt\right],
\end{aligned}
\end{equation*}
where in the fourth equality we use the liquidation constraint $X_T^{*,i}=0$.
Using that $\mathbb E[\mathbb E[x]y]=\mathbb E[x]\mathbb E[y]=\mathbb E[x\mathbb E[y]]$ for any random variables $x$ and $y$, the second term in the above sum can be rewritten as 
\begin{equation*}
\begin{aligned}
&\mathbb E\left[\int^T_0\left\langle \mathcal P^i_t,B^i_t(\underline\chi_t-\underline\chi^*_t)\right\rangle\,dt\right]\\
=&\frac{1}{N}\mathbb E\left[\int^T_0\left\langle \mathcal P^i_t,B^{i,(1)}_t\right\rangle\left(\xi^i_t-\xi^{*,i}_t\right)\,dt\right]+\frac{1}{N}\mathbb E\left[\int^T_0\left\langle \mathcal P^i_t,B^{i,(2)}_t\right\rangle\mathbb E\left[X^i_t-X^{*,i}_t\right]\,dt\right]\\
=&\frac{1}{N}\mathbb E\left[\int^T_0\left\langle \mathcal P^i_t,B^{i,(1)}_t\right\rangle\left(\xi^i_t-\xi^{*,i}_t\right)\,dt\right]+\frac{1}{N}\mathbb E\left[\int^T_0\mathbb E\left[\left\langle \mathcal P^i_t,B^{i,(2)}_t\right\rangle\right]\left(X^i_t-X^{*,i}_t\right)\,dt\right].
\end{aligned}
\end{equation*}
Thus, 
\begin{equation}\label{ps-int}
\begin{aligned}
&\mathbb E\left[\int^T_0 X^{*,i}_t\left\langle\Theta,-A^i_t(\underline{\mathcal S}^i_t-\underline{\mathcal S}^{*,i}_t)+B^i_t(\underline\chi_t-\underline\chi^*_t)\right\rangle\right]\\
=&\frac{1}{N}\mathbb E\left[\int^T_0\left\langle \mathcal P^i_t,B^{i,(1)}_t\right\rangle\left(\xi^i_t-\xi^{*,i}_t\right)\,dt\right]+\frac{1}{N}\mathbb E\left[\int^T_0\mathbb E\left[\left\langle \mathcal P^i_t,B^{i,(2)}_t\right\rangle\right]\left(X^i_t-X^{*,i}_t\right)\,dt\right].
\end{aligned}
\end{equation}
Note that
\begin{equation}\label{diff-i}
\begin{aligned}
\mathbb{I}-\mathbb{II}=&\mathbb E\left[\int_0^T \eta^i_t\left(\xi^i_t-\xi^{*,i}_t\right)^2+\lambda^i_t\left(X^i_t-X^{*,i}_t\right)^2+X^i_t\left\langle\Theta,-A^i_t(\underline{\mathcal S}^i_t-\underline{\mathcal S}^{*,i}_t)+B^i_t(\underline\chi_t-\underline\chi^*_t)\right\rangle\right.\\
&\left.-\frac{1}{N}\left\langle B^{i,(1)}_t,\mathcal P^{i}_t\right\rangle\left(\xi^i_t-\xi^{*,i}_t\right)-\frac{1}{N}\mathbb E\left[\left\langle \mathcal P^i_t,B^{i,(2)}_t\right\rangle\right]\left(X^i_t-X^{*,i}_t\right)\,dt\right].
\end{aligned}
\end{equation}
Plugging \eqref{ps-int} into \eqref{diff-i}, we get the desired representation.
\end{proof}

We are now ready to finish the proof of the verification result. 

\textsc{Proof of Theorem \ref{thm:verification}.}
Using the constants appearing in \eqref{inequality-general}, we have
\begin{equation*}
	\begin{split}
		&~\mathbb E\left[\int_0^T\left(X^i_t-X^{*,i}_t\right)\left\langle\Theta,-A^i_t(\underline{\mathcal S}^i_t-\underline{\mathcal S}^{*,i}_t)+B^i_t(\underline\chi_t-\underline\chi^*_t)\right\rangle\,dt\right]\\
		\leq&~\frac{\theta_1+\theta_2}{2}\mathbb E\left[\int_0^T | X^i_t-X^{*,i}_t |^2  \right]+\frac{\|A\|^2}{2\theta_1}\mathbb E\left[\int_0^T| \underline{\mathcal S}^i_t-\underline{\mathcal S}^{*,i}_t |^2\,dt  \right]+\frac{1}{2\theta_2}\mathbb E\left[\int_0^T|B^i_t(\underline\chi_t-\underline\chi^*_t)|^2\,dt \right].
	\end{split}
\end{equation*}
The dynamics $\underline{\mathcal S}^i_t-\underline{\mathcal S}^{*,i}_t=\int^t_0\left( -A^i_s(\underline {\mathcal S}^i_s-\underline{\mathcal S}^{*,i}_s)+B^i_s(\underline\chi_s-\underline\chi^*_s)\right)\,ds$  and the estimate leading to \eqref{estimate:S-step2} imply
\[
	\mathbb E\left[\int_0^T| \underline{\mathcal S}^i_t-\underline{\mathcal S}^{*,i}_t |^2\,dt  \right]\leq \frac{1}{\widehat\rho^2}\mathbb E\left[\int_0^T|B^i_t(\underline \chi_t-\underline\chi^*_t )|^2\,dt  \right].
\]
Thus, 
\begin{equation*}
\begin{split}
&~\mathbb E\left[\int_0^T\left(X^i_t-X^{*,i}_t\right)\left\langle\Theta,-A^i_t(\underline{\mathcal S}^i_t-\underline{\mathcal S}^{*,i}_t)+B^i_t(\underline\chi_t-\underline\chi^*_t)\right\rangle\,dt\right]\\
\leq&~\frac{\theta_1+\theta_2}{2}\mathbb E\left[\int_0^T | X^i_t-X^{*,i}_t |^2  \right]+\left(\frac{\|A\|^2}{2\theta_1\widehat\rho^2}+\frac{1}{2\theta_2}\right)\mathbb E\left[\int_0^T|B^i_t(\underline\chi_t-\underline\chi^*_t)|^2\,dt \right]\\
\leq&~\left(\frac{\theta_1+\theta_2}{2}+\frac{\|B^{(2)}\|^2}{N^2}\left(1+\frac{1}{\theta_3} \right)\left(\frac{\|A\|^2}{2\theta_1\widehat\rho^2}+\frac{1}{2\theta_2}\right)\right)\mathbb E\left[\int_0^T | X^i_t-X^{*,i}_t |^2\,dt  \right]\\
&\quad +(1+\theta_3)\frac{\|B^{(1)}\|^2}{N^2}\left(\frac{\|A\|^2}{2\theta_1\widehat\rho^2}+\frac{1}{2\theta_2}\right)\mathbb E\left[ \int_0^T| \xi^i_t-\xi^{*,i}_t  |^2\,dt \right].
\end{split}
\end{equation*}

Due to the decomposition \eqref{cost-decomposition} and Assumption \eqref{stronger-ass}, we have by 
\begin{equation*}
\begin{aligned}
&~J(\xi,\xi^{*,-i})-J(\xi^{*,i},\xi^{*,-i})\\
\geq&~\left(\lambda_{\min}-\frac{\theta_1+\theta_2}{2}-\frac{\|B^{(2)}\|^2}{N^2}\left(1+\frac{1}{\theta_3} \right)\left(\frac{\|A\|^2}{2\theta_1\widehat\rho^2}+\frac{1}{2\theta_2}\right)\right)\mathbb E\left[\int^T_0\left(X^i_t-X^{*,i}_t\right)^2\,dt\right]\\
&~ +\left(\eta_{\min}-(1+\theta_3)\frac{\|B^{(1)}\|^2}{N^2} \left(\frac{\|A\|^2}{2\theta_1\widehat\rho^2}+\frac{1}{2\theta_2}\right)\right)\mathbb E\left[\int^T_0\left(\xi^i_t-\xi^{*,i}_t\right)^2\,dt\right]\\
\geq&~ 0.
\end{aligned}
\end{equation*}
\hfill $\Box$

\subsection{Approximation by penalization}\label{sec:penalization}

It has been shown in various settings that the optimal trading strategies in models in which open positions are increasingly penalized converge to optimal trading strategies in models where full liquidation is required; see, e.g.~\cite{ET-2020, FGHP-2018, HX1} for details. If the strict liquidation constraint is replaced by a penalization $n(X^i_T)^2$ of open positions at the terminal time, the  FBSDE system \eqref{fbsde} changes to  

\begin{equation}\label{penalized-FBSDE}
\left\{\begin{aligned}
dX^i_t=&~-\frac{M^i_t-\frac{1}{N}\langle \widehat B^{i,(1)}_t,\mathcal P^{i}_t\rangle}{2\eta^i_t}\,dt,\\
d\mathcal S^i_t=&~\left(-A^i_t\mathcal S^i_t+K^i_t\chi_t+\mathcal R^i_t\right)\,dt,\\
-dM^i_t=&~\left(2\lambda^i_tX^i_t+\frac{1}{N}\mathbb E\left[\left\langle \widehat B^{i,(2)}_t,\mathcal P^{i}_t\right\rangle\right]
+\left\langle\Theta,-A^i_t\mathcal S^i_t+K^i_t\chi_t+\mathcal R^i_t\right\rangle\right)\,dt - Z^{M^i}_t\,dW_t,\\
-d\mathcal P^i_t=&~\left(-(A^i_t)^\top \mathcal P^i_t+\Theta\frac{M^i_t-\frac{1}{N}\langle \widehat B^{i,(1)}_t,\mathcal P^{i}_t\rangle}{2\eta^i_t}\right)\,dt-Z^{\mathcal P^i}_t\,dW_t,\\
X^i_0=&~\mathcal X^i,~ \mathcal S^i_0=(0,0)^\top, ~M^i_T=2nX^i_T-\mathcal S^{i,(1)}_T,~ \mathcal P^i_T=(0,0)^\top,
\end{aligned}\right.
\end{equation}
where $\mathcal S^{i,(1)}$ is the first component of $\mathcal S^i$. The same arguments as in the proof of \cite[Lemma 4.5]{FGHP-2018} show that 
\[
	\mathbb E\left[\int_0^T |\mathcal P^{i,n}_t-\mathcal P^i_t|^2  \,dt  \right]+	\mathbb E\left[\int_0^T |M^{i,n}_t-M^i_t|^2  \,dt  \right]+	\mathbb E\left[\int_0^T |\mathcal S^{i,n}_t-\mathcal S^i_t|^2  \,dt  \right]\rightarrow 0.
\]	

From this, we immediately obtain that the model with liquidation constraint can be approximated by a sequence of model with increasing penalization. Specifically, using the same arguments as in the proof of \cite[Theorem 4]{FGHP-2018} it is not difficult to prove the following approximation result. 

\begin{proposition}
Let $(X^i,\mathcal S^i,M^i,\mathcal P^i,Z^{M^i},Z^{\mathcal P^i})$ and $(X^{i,n},\mathcal S^{i,n},M^{i,n},\mathcal P^{i,n},Z^{M^{i,n}},Z^{\mathcal P^{i,n}})$ be the solutions of \eqref{fbsde} and \eqref{penalized-FBSDE}, respectively. Then, 
	\[
		\mathbb E\left[\sup_{0\leq t\leq T}|X^{i,n}_t-X^i_t|^2\right]+\mathbb E\left[\sup_{0\leq t\leq T}|\mathcal S^{i,n}_t-\mathcal S^i_t|^2\right]\rightarrow 0\quad\textrm{as }n\rightarrow\infty.
	\]
\end{proposition}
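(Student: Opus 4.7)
The plan is to leverage the $L^2$-type convergence
\[
\mathbb{E}\int_0^T |M^{i,n}_t - M^i_t|^2 + |\mathcal{P}^{i,n}_t - \mathcal{P}^i_t|^2 + |\mathcal{S}^{i,n}_t - \mathcal{S}^i_t|^2 \,dt \longrightarrow 0,
\]
already stated just before the proposition, and to upgrade it to uniform-in-time convergence of the two forward processes $X^{i,n}$ and $\mathcal{S}^{i,n}$. Both of these processes admit pathwise integral representations whose drivers are controlled in $L^2$, so the upgrade is essentially a matter of Cauchy--Schwarz and Gronwall applied pathwise, without any need to revisit the singular backward component.

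First, for the portfolio process, I would subtract the two forward equations to obtain
\[
X^{i,n}_t - X^i_t = -\int_0^t \frac{(M^{i,n}_s - M^i_s) - \tfrac{1}{N}\langle \widehat B^{i,(1)}_s, \mathcal{P}^{i,n}_s - \mathcal{P}^i_s\rangle}{2\eta^i_s}\,ds.
\]
Since $\eta^i$ is uniformly bounded away from zero and $\widehat B^{i,(1)}$ is essentially bounded, Cauchy--Schwarz applied to the squared integral gives
\[
\sup_{0\leq t\leq T}|X^{i,n}_t - X^i_t|^2 \leq C\int_0^T \bigl(|M^{i,n}_s - M^i_s|^2 + |\mathcal{P}^{i,n}_s - \mathcal{P}^i_s|^2\bigr)\,ds,
\]
and taking expectations together with the cited $L^2$ convergence yields the desired $\mathbb{S}^2$ convergence of $X^{i,n}$.

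Next, for $\mathcal{S}^{i,n}$, the difference $\delta\mathcal{S}^i := \mathcal{S}^{i,n} - \mathcal{S}^i$ satisfies the linear driven ODE $d\,\delta\mathcal{S}^i_t = \bigl(-A^i_t\,\delta\mathcal{S}^i_t + K^i_t\,\delta\chi_t\bigr)\,dt$ with zero initial value, where $\delta\chi$ is assembled from $\xi^{j,n} - \xi^j$ (linear in $M^{j,n}-M^j$ and $\mathcal{P}^{j,n}-\mathcal{P}^j$) and $\mathbb{E}[X^{j,n} - X^j]$, $j=1,\dots,N$. Boundedness of $A^i,K^i$ together with Gronwall's lemma gives
\[
\sup_{0\leq t\leq T}|\delta\mathcal{S}^i_t|^2 \leq C \int_0^T|\delta\chi_s|^2\,ds,
\]
and the right-hand side vanishes as $n \to \infty$ by the first step above combined with the stated $L^2$ convergences. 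The genuine difficulty has been absorbed into the invocation of \cite[Lemma 4.5]{FGHP-2018}: that is where one must show that the singular-terminal backward component $M^{i,n}$, whose terminal value $2nX^{i,n}_T - \mathcal{S}^{i,n,(1)}_T$ blows up as $n \to \infty$, nevertheless converges in $L^2(\Omega\times(0,T))$ to the process $M^i$ solving the problem with strict liquidation constraint. Once this nontrivial ingredient is granted, the sup-norm convergence of both forward processes follows by the elementary estimates sketched above.
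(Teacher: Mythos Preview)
Your proposal is correct and follows the route the paper itself indicates: the paper does not spell out a proof but simply says that ``using the same arguments as in the proof of \cite[Theorem 4]{FGHP-2018} it is not difficult to prove'' the result, taking as given the $L^2$ convergences of $M^{i,n}-M^i$, $\mathcal P^{i,n}-\mathcal P^i$ and $\mathcal S^{i,n}-\mathcal S^i$ (which in turn are obtained via \cite[Lemma 4.5]{FGHP-2018}). Your direct pathwise argument---Cauchy--Schwarz on the integral representation of $X^{i,n}-X^i$, and Gronwall on the linear ODE for $\mathcal S^{i,n}-\mathcal S^i$ with driver $K^i\delta\chi$---is exactly the natural way to upgrade these $L^2$ convergences to $\mathbb S^2$ convergence of the forward components, and is in the spirit of what the cited reference does.
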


\section{From many player games to mean-field games}\label{sec-convergence}

In this section we prove the convergence of the Nash equilibria in the $N$-player game to the Nash equilibrium of the corresponding MFG under the homogeneity condition \eqref{coff-hom}. This is achieved by establishing the convergence of the solutions to the FBSDE system \eqref{NP-fbsde} to the solution to the corresponding mean-field FBSDE \eqref{MF-fbsde} as $N \to \infty$. More precisely, let 
\[
	\left( \overline X^i,\overline {\mathcal S}^i,\overline M^i,\overline {\mathcal P}^i, Z^{\overline M^i}, Z^{\overline{\mathcal P}^i} \right) \in \mathcal H_{a,\mathcal F}\times \mathbb S^{2}_{\mathcal F}\times L^{2}_{\mathcal F}\times\mathcal H_{\iota,\mathcal F}\times L^{2,-}_{\mathcal F}\times L^{2}_{\mathcal F}
\]
be the unique solution to the mean-field FBSDE \eqref{MF-fbsde} with $W = W^i$, $\mathcal X = \mathcal X^i$, $\lambda=\lambda^i$, $\eta=\eta^i$, $\rho=\rho^i$, $\alpha=\alpha^i$, $\beta=\beta^i$ and $\gamma=\gamma^i$.   
Using the Yamada-Watanabe result for mean-field FBSDE established in \cite[Lemma 3.2]{FGHP-2018}, there exists a measurable function $\Sigma$ independent of $i$ such that
\[
	(\overline X^i_t,\overline{\mathcal S}^i_t,\overline M^i_t,\overline{\mathcal P}^i_t)=\Sigma(t,\mathcal X^i,W^i_{\cdot \wedge t}).
\]
In particular, the mean field equilibrium state and control satisfy
\[
	\nu_t=\mathbb E[\overline X^i_t]\qquad\textrm{and}\qquad\mu_t =\mathbb E\left[\frac{\overline M^i_t}{2\eta^i_t}\right].
\] 

\begin{lemma}
It hold that
\begin{equation}\label{c:mu-converge}
	\mathbb E\left[\int^T_0\left(\frac{1}{N}\sum_{j=1}^N\frac{\overline M^j_t}{2\eta^j_t}-\mu_t\right)^2\,dt\right]\xrightarrow{N\rightarrow \infty} 0,
\end{equation}
and
\begin{equation}\label{c:x-converge}
\mathbb E\left[\sup_{0\leq t\leq T}\left(\frac{1}{N}\sum_{j=1}^N \overline X^{j}_t-\nu_t\right)^2\,dt\right]\xrightarrow{N\rightarrow \infty} 0.
\end{equation} 
\end{lemma}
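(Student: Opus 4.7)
The plan is to use the measurable representation $(\overline X^i_t,\overline{\mathcal S}^i_t,\overline M^i_t,\overline{\mathcal P}^i_t)=\Sigma(t,\mathcal X^i,W^i_{\cdot\wedge t})$ to reduce both convergence statements to a law of large numbers for i.i.d.\ random elements. Indeed, under \eqref{coff-hom} the coefficient functions depend on $i$ only through $(\mathcal X^i, W^i)$, so by \eqref{coff-hom2} the pairs $(\mathcal X^i,W^i)$ are i.i.d.\ and, since $\Sigma$ does not depend on $i$, the processes $\{(\overline X^i, \overline M^i, \eta^i)\}_{i\in\mathbb N}$ form an i.i.d.\ family. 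In particular $\{\overline M^i/(2\eta^i)\}_i$ are i.i.d., and similarly for $\{\overline X^i\}_i$.

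For \eqref{c:mu-converge} I would compute the variance directly. Since $\mu_t=\mathbb E[\overline M^1_t/(2\eta^1_t)]$ and the summands are i.i.d.,
\begin{equation*}
\mathbb E\left[\int_0^T\left(\frac1N\sum_{j=1}^N\frac{\overline M^j_t}{2\eta^j_t}-\mu_t\right)^2dt\right]=\frac1N\int_0^T\mathrm{Var}\!\left(\frac{\overline M^1_t}{2\eta^1_t}\right)dt\leq\frac{1}{4N\eta_{\min}^2}\|\overline M^1\|_{L^2}^2,
\end{equation*}
which tends to zero since $\overline M^1\in L^2_{\mathcal F}$ by Theorem \ref{fbsde-thm}. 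This step is essentially routine once the i.i.d.\ property has been established.

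For \eqref{c:x-converge} the point is to convert the supremum over $t$ into an $L^1_t$ estimate on the trading rate via the explicit integral representation of the portfolio. Using the forward equation in \eqref{MF-fbsde} together with its expectation,
\begin{equation*}
\frac1N\sum_{j=1}^N\overline X^j_t-\nu_t=\left(\frac1N\sum_{j=1}^N\mathcal X^j-\mathbb E[\mathcal X]\right)-\int_0^t\left(\frac1N\sum_{j=1}^N\frac{\overline M^j_s}{2\eta^j_s}-\mu_s\right)ds.
\end{equation*}
Taking the supremum over $t\in[0,T]$, squaring, and applying Cauchy–Schwarz yields a bound by $\frac{2}{N}\mathrm{Var}(\mathcal X)+2T$ times the quantity in \eqref{c:mu-converge}. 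Both terms vanish as $N\to\infty$: the first by the classical $L^2$ law of large numbers for the i.i.d.\ integrable initial conditions, the second by the result just proven.

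The only conceptual obstacle is justifying the i.i.d.\ property itself, which requires that $\Sigma$ be the same function for each player. This is where the homogeneity assumption \eqref{coff-hom} and the Yamada–Watanabe type uniqueness argument of \cite[Lemma 3.2]{FGHP-2018} enter; once $\Sigma$ is identified independently of $i$, the rest of the argument is essentially an exercise in the $L^2$ law of large numbers. Notice that even though \eqref{c:x-converge} asks for convergence in the supremum norm (not just in $L^2_t$), the linear forward dynamics for $X$ let us reduce it to convergence in $L^2$ of the trading rates plus LLN for the initial positions, so no Doob-type maximal inequality is needed.
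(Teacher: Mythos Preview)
Your argument is correct and follows essentially the same route as the paper: the i.i.d.\ structure coming from the Yamada--Watanabe representation reduces \eqref{c:mu-converge} to a variance computation, and \eqref{c:x-converge} then follows from the forward dynamics of $\overline X^i$. The paper compresses the second step into a single sentence (``By considering the dynamics of $\overline X^i$, the convergence \eqref{c:x-converge} follows''), whereas you spell out the integral decomposition and the Cauchy--Schwarz bound; your more explicit treatment is a welcome clarification. One small slip: in the last paragraph you say ``i.i.d.\ integrable initial conditions'', but your bound $\tfrac{2}{N}\mathrm{Var}(\mathcal X)$ requires square integrability, which is exactly what \eqref{coff-hom2} provides.
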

\begin{proof}
By Theorem \ref{fbsde-thm}, there exists a constant $C$ independent of $i$ such that
\begin{equation}\label{xi-est}
\mathbb E\left[\int^T_0\left(\frac{\overline M^i_t}{2\eta^i_t}\right)^2\,dt\right]\leq C.
\end{equation}
Since $\frac{\overline M^k_t}{2\eta^k_t}$ and $\frac{\overline M^j_t}{2\eta^j_t}$ are independent and identically distributed for $k\neq j$ it follows that
\begin{equation*}
\begin{aligned}
	& \mathbb E\left[\int^T_0\left(\frac{1}{N}\sum_{j=1}^N\frac{\overline M^j_t}{2\eta^j_t}-\mu_t\right)^2\,dt\right] \\
	=&\frac{1}{N^2}\mathbb E\left[\int^T_0\sum_{k\neq j}\left(\frac{\overline M^k_t}{2\eta^k_t}-\mu_t\right)\left(\frac{\overline M^j_t}{2\eta^j_t}-\mu_t\right)\,dt\right]
	+	\frac{1}{N^2}\mathbb E\left[\int^T_0\sum_{k=1}^N\left(\frac{\overline M^k_t}{2\eta^k_t}-\mu_t\right)^2\,dt\right] \\
	\leq & \frac{4C}{N} \xrightarrow{N\rightarrow \infty} 0.
\end{aligned}
\end{equation*}
By considering the dynamics of $\overline X^i$, the convergence \eqref{c:x-converge} follows.
\end{proof}

Let $(X^i,\mathcal{\underline S}^i,M^i,\mathcal P^i,Z^{M^i},Z^{\mathcal P^i})$ be the unique solution of \eqref{NP-fbsde} and
%
\[
	\left( \delta X^i,\delta \mathcal S^i,\delta M^i,\delta\mathcal P^i,\delta Z^{M^i},\delta Z^{\mathcal P^i} \right)
	:= \left( X^i-\overline X^i,\underline{\mathcal S}^i-\overline {\mathcal S}^i,M^i-\overline M^i,\mathcal P^i-\overline {\mathcal P}^i,Z^{M^i}- Z^{\overline{M}^i}e_i,Z^{\mathcal P^i}- Z^{\overline{\mathcal P}^i}e_i \right),
\]	
where $e_i$ denotes the $i$th unit vector in $\bR^N$. The FBSDE 
\begin{equation}\label{eq:diff-convergence}
\left\{\begin{aligned}
d\delta X^i_t=&~-\frac{\delta M^i_t-\frac{1}{N}\left\langle B^{i,(1)}_t,\mathcal P^{i}_t\right\rangle}{2\eta^i_t}\,dt,\\
d\delta \mathcal S^i_t=&~\left(-A^i_t\delta \mathcal S^i_t+B^i_t\delta \chi_t+\delta\mathcal R^i_t\right)\,dt,\\
-d\delta M^i_t=&~\left(2\lambda^i_t\delta X^i_t+\frac{1}{N}\mathbb E\left[\left\langle B^{i,(2)}_t ,\mathcal P^i_t  \right\rangle\right]+\left\langle\Theta,-A^i_t\delta \mathcal S^i_t+B^i_t\delta \chi_t+\delta\mathcal R^i_t\right\rangle\right)\,dt-\delta Z^{M^i}_t\,dW^{}_t,\\
-d\delta \mathcal P^i_t=&~\left(-(A^i_t)^\top\delta \mathcal P^i_t+\Theta\frac{\delta M^i_t-\frac{1}{N}\left\langle B^{i,(1)}_t, \mathcal P^{i}_t\right\rangle}{2\eta^i_t}\right)\,dt-\delta Z^{\mathcal P^i}_t\,dW^{}_t,\\
\delta X^i_0=&~0, ~\delta X^i_T=0,~\delta \mathcal S^i_0=(0,0)^\top,~ \delta \mathcal P^i_T=(0,0)^\top,
\end{aligned}\right.
\end{equation}
where
\[
	\delta\chi=\left( \frac{1}{N}\sum_{j=1}^N\frac{M^j-\frac{1}{N}\left\langle B^{j,(1)}, \mathcal P^j \right\rangle}{2\eta^j}-\mathbb E\left[ \frac{\overline M^i}{2\eta^i} \right],\frac{1}{N}\sum_{j=1}^N\mathbb E[X^j]-\mathbb E[\overline X^i] \right)^\top
\]
and
\[
  \delta\mathcal R^i=\left(\frac{\alpha^i\gamma^i}{N}\sum_{j=1}^N\mathbb E[\mathcal X^j]-\alpha^i\gamma^i\mathbb E[\mathcal X^i],\frac{\alpha^i}{N} \sum_{j=1}^N\mathbb E[\mathcal X^j]-\alpha^i\mathbb E[\mathcal X^i]   \right)^\top\equiv (0,0)^\top.
\]
has a unique solution. This allows us to establish the convergence of the Nash equilibria on the $N$-player game to the mean field solution as $N \to \infty$.

\begin{theorem}
{Let \eqref{cond-A} and \eqref{ass-1} hold for all $N$ large enough.}	The following convergence holds
\begin{equation*}
	\mathbb E\left[\int_0^T|\delta M^i_t|^2\,dt\right] + \mathbb E\left[\sup_{0\leq t\leq T}|\delta X^i_t|^2\,dt\right] \xrightarrow{N\rightarrow\infty}0.
\end{equation*}
As a result, the optimal strategy of player $i$ in the $N$-player game converges to the one in MFG, i.e.,
\[
	\mathbb E\left[ \int_0^T| \xi^{*,i,N}_t-\overline{\xi}^{*,i}_t |^2\,dt \right]\rightarrow 0.
\]
where $\xi^{*,i,N}:=\frac{M^i-\frac{1}{N}\left\langle B^{i,(1)}, \mathcal P^i \right\rangle}{2\eta^i}$ and  $\overline{\xi}^{*,i}:=\frac{\overline{M}^i}{2\eta^i}$.
\end{theorem}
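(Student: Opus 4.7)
The plan is to mimic the energy--estimate / contraction argument developed in Step~2 of the proof of Theorem~\ref{fbsde-thm}, applied directly to the system \eqref{eq:diff-convergence} satisfied by the differences $(\delta X^i,\delta\mathcal S^i,\delta M^i,\delta\mathcal P^i)$. The key observation is that the driving ``non--homogeneous'' input for this linear FBSDE is the process $\delta\chi$, which splits naturally into a genuine difference part (linear in $\delta X^j,\delta M^j,\delta\mathcal P^j$) plus a pure error part that vanishes in $L^2$ by the preceding lemma and the $1/N$ decay of the $\frac{1}{N}\langle B^{j,(1)},\mathcal P^j\rangle$ terms.

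More precisely, I would first decompose
\begin{equation*}
\delta\chi_t = \underbrace{\Bigl(\tfrac{1}{N}\sum_j\tfrac{\delta M^j_t}{2\eta^j_t},\,\tfrac{1}{N}\sum_j\mathbb E[\delta X^j_t]\Bigr)^{\!\top}}_{=:\,\delta\chi^{\mathrm{lin}}_t} \;+\; \underbrace{\Bigl(\tfrac{1}{N}\sum_j\tfrac{\overline M^j_t}{2\eta^j_t}-\mathbb E\bigl[\tfrac{\overline M^i_t}{2\eta^i_t}\bigr],\,\tfrac{1}{N}\sum_j\mathbb E[\overline X^j_t]-\mathbb E[\overline X^i_t]\Bigr)^{\!\top}}_{=:\,\delta\chi^{\mathrm{err}}_t} \;+\;O(1/N),
\end{equation*}
where the last $O(1/N)$ absorbs the $\frac{1}{N}\langle B^{j,(1)},\mathcal P^j\rangle/(2\eta^j)$ contribution and, by Theorem~\ref{fbsde-thm}, is uniformly $L^2$--bounded. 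The lemma containing \eqref{c:mu-converge}--\eqref{c:x-converge}, together with the homogeneity assumption \eqref{coff-hom}--\eqref{coff-hom2}, ensures that $\mathbb E[\int_0^T|\delta\chi^{\mathrm{err}}_t|^2\,dt]\to 0$ as $N\to\infty$.

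Next, I would apply integration by parts to $\delta X^i\,\delta M^i$ on $[0,T-\epsilon]$, exactly as in \eqref{tildeX}, use $\delta X^i_0=0$ and the liquidation constraint $\delta X^i_T=0$ (noting as in Step~2 of the proof of Theorem~\ref{fbsde-thm} that $\delta X^i_{T-\epsilon}\delta M^i_{T-\epsilon}\ge \delta X^i_{T-\epsilon}\delta\mathscr B^i_{T-\epsilon}$ via the ansatz $\delta M^i=\mathscr A^i\delta X^i+\delta\mathscr B^i$), take expectations, and let $\epsilon\downarrow 0$. Controlling $\delta\mathcal S^i$ via the linear ODE as in \eqref{estimate:S-step2} and $\delta\mathcal P^i$ via the Lipschitz BSDE estimate \eqref{ineq-Pi-general}, then summing over $i$, Young's inequality will give an estimate of the shape
\begin{equation*}
\sum_{i=1}^N\mathbb E\!\left[\int_0^T\!\!\Bigl(\lambda^i_t(\delta X^i_t)^2+\tfrac{(\delta M^i_t)^2}{\eta^i_t}\Bigr)dt\right]
\le \kappa_N\sum_{i=1}^N\mathbb E\!\left[\int_0^T\!\!\Bigl((\delta X^i_t)^2+\bigl(\tfrac{\delta M^i_t}{2\eta^i_t}\bigr)^{\!2}\Bigr)dt\right]+C\,\mathbb E\!\left[\int_0^T|\delta\chi^{\mathrm{err}}_t|^2 dt\right]+\tfrac{C}{N},
\end{equation*}
with $\kappa_N<\lambda_{\min}\wedge\eta_{\min}$ for all $N$ large enough, precisely by Assumption~\ref{Ass-1}(iii). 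Absorbing the first term on the right into the left yields
$\sum_i\mathbb E[\int_0^T(\delta X^i)^2+(\delta M^i)^2\,dt]\to 0$, and by exchangeability this forces the per--player quantities to vanish as well.

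The main obstacle will be the term involving $\delta\chi^{\mathrm{lin}}$: when expanded via Young's inequality as in \eqref{esti:K-chi}, it contributes copies of $(\delta X^j)^2$ and $(\delta M^j)^2/\eta^j$ for \emph{all} $j$, so the coefficients have to be tracked carefully after summation over $i$ to ensure that the weak--interaction condition \eqref{ass-1} still produces a strictly positive coefficient on the left--hand side (this is exactly where ``for all $N$ large enough'' in the hypothesis is used). Once the energy estimate is closed, the convergence of the controls follows immediately from
\begin{equation*}
\xi^{*,i,N}_t-\overline\xi^{*,i}_t=\frac{\delta M^i_t}{2\eta^i_t}-\frac{1}{2N\eta^i_t}\langle B^{i,(1)}_t,\mathcal P^i_t\rangle,
\end{equation*}
the $L^2$--control of $\delta M^i$, and $\|\mathcal P^i\|_{L^2}\le C$ uniformly in $i,N$ by Theorem~\ref{fbsde-thm}; the supremum bound on $\delta X^i$ then follows by going back to its linear ODE with input $\delta M^i-\tfrac{1}{N}\langle B^{i,(1)},\mathcal P^i\rangle$ and using estimate \eqref{esti-A}.
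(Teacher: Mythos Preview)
Your plan is essentially the paper's own proof: decompose $\delta\chi$ into a linear part in $(\delta M^j,\delta X^j)$, the LLN error controlled by \eqref{c:mu-converge}, and an $O(1/N)$ remainder from $\tfrac{1}{N}\langle B^{j,(1)},\mathcal P^j\rangle$; run the energy estimate of Step~2 of Theorem~\ref{fbsde-thm} on \eqref{eq:diff-convergence} using \eqref{ineq-Pi-general} and \eqref{estimate:S-step2}; aggregate over $i$; close with \eqref{ass-1}; and then recover the sup bound on $\delta X^i$ from its ODE.

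There is one bookkeeping slip. In your displayed inequality after ``summing over $i$'', the non--homogeneous terms $C\,\mathbb E[\int_0^T|\delta\chi^{\mathrm{err}}_t|^2\,dt]$ and $C/N$ must pick up a factor $N$ (each player contributes its own copy). Since $\mathbb E[\int_0^T|\delta\chi^{\mathrm{err}}_t|^2\,dt]=O(1/N)$ by the lemma, the right--hand side is then only $O(1)$, so what you actually obtain is $\tfrac{1}{N}\sum_i\mathbb E[\int_0^T((\delta X^i_t)^2+(\delta M^i_t)^2)\,dt]\to 0$, not the convergence of the full sum. Your exchangeability argument still recovers the per--player convergence from this, so the gap is harmless. (The paper reaches the same point by taking the \emph{average} over $i$ rather than the sum, and then, instead of invoking exchangeability, re--inserts the vanishing averages $\tfrac{1}{N}\sum_j(\cdots)$ back into the individual-$i$ inequality to conclude; both routes are valid.)
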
 
\begin{proof}
Using $M^j_t=\delta M^j_t+\overline M^j_t$ and $X^j_t=\delta X^j_t+\overline X^j_t$ we have that
\begin{equation}\label{eq:diff-chi-convergence}
	\begin{split}
	\delta\chi=&~\left(\frac{1}{N}\sum_{j=1}^N\frac{\delta  M^j}{2\eta^j}+\frac{1}{N}\sum_{j=1}^N\frac{\overline  M^j}{2\eta^j}-\mathbb E\left[\frac{\overline  M^i}{2\eta^j}\right],~\frac{1}{N}\sum_{j=1}^N\mathbb E[\delta X^j]+\frac{1}{N}\sum_{j=1}^N\mathbb E[\overline X^j]-\mathbb E\left[\overline X^i\right]	\right)^\top\\
	&~+\left(-\frac{1}{N^2}\sum_{j=1}^N\frac{\left\langle B^{j,(1)}, \mathcal P^j \right\rangle}{2\eta^j} ,~0 \right)^\top\\
	=&~\left(\frac{1}{N}\sum_{j=1}^N\frac{\delta  M^j}{2\eta^j}+\frac{1}{N}\sum_{j=1}^N\frac{\overline  M^j}{2\eta^j}-\mathbb E\left[\frac{\overline  M^i}{2\eta^j}\right],~\frac{1}{N}\sum_{j=1}^N\mathbb E[\delta X^j]	\right)^\top\\
	&~+\left(-\frac{1}{N^2}\sum_{j=1}^N\frac{\left\langle B^{j,(1)}, \mathcal P^j \right\rangle}{2\eta^j} ,~0 \right)^\top.
	\end{split}
\end{equation}
In view of \eqref{ineq-Pi-general} and \eqref{estimate:S-step2}, we have that
\begin{equation}\label{estimate:P-convergence}
\begin{aligned}
\mathbb E\left[\int^T_0|\mathcal P^i_t|^2\,dt\right]&\leq\frac{1}{\widetilde{\rho}^2}\mathbb E\left[\int^T_0\left(\frac{ M^i_t}{2\eta^i_t}\right)^2\,dt\right]=\frac{1}{\widetilde{\rho}^2}\mathbb E\left[\int^T_0\left(\frac{\delta M^i_t+\overline{M}^i_t}{2\eta^i_t}\right)^2\,dt\right].
\end{aligned}
\end{equation}
and that
\begin{equation}\label{estimate:S-convergence}
	\mathbb E\left[\int_0^T|\delta\mathcal S^i_t|^2\,dt \right]\leq \frac{1}{\widehat\rho^2}\mathbb E\left[ \int_0^T |B^i_t\delta\chi_t|^2 \right].
\end{equation}
Taking \eqref{eq:diff-chi-convergence} into \eqref{eq:diff-convergence}, following the proof of Theorem \ref{fbsde-thm} and using \eqref{estimate:P-convergence} and \eqref{estimate:S-convergence}, we obtain
\begin{align*}
		&~\left(2\lambda_{\min}-\frac{\theta_0+\theta_1+\theta_2}{2}\right)\mathbb E\left[\int_0^T(\delta X^i_t)^2\,dt\right]+\left(2\eta_{\min}-\frac{\theta}{2}\right)\mathbb E\left[ \int_0^T\left( \frac{\delta M^i_t}{2\eta^i_t} \right)^2\,dt \right]\\
		\leq&~\left(\frac{\|B^{(2)}\|^2}{\theta_0}+\frac{\|B^{(1)}\|^2}{\theta}  \right)\frac{1}{2N^2}\mathbb E\left[\int_0^T|\mathcal P^i_t|^2\,dt  \right]+\left(\frac{\|A\|^2}{2\theta_1\widehat\rho^2}+\frac{1}{2\theta_2}   \right)\mathbb E\left[ \int_0^T|  B^i\delta\chi_t  |^2\,dt  \right]\\
		\leq&~\left(\frac{\|B^{(2)}\|^2}{\theta_0}+\frac{\|B^{(1)}\|^2}{\theta}  \right)\frac{1}{2N^2}\mathbb E\left[\int_0^T|\mathcal P^i_t|^2\,dt  \right]\\
		&~+(1+\theta_3)\left( \frac{\|A\|^2}{2\theta_1\widehat\rho^2}+\frac{1}{2\theta_2} \right)\|B^{(1)}\|^2\mathbb E\left[ \int_0^T\left( \frac{1}{N}\sum_{j=1}^N\frac{\delta  M^j}{2\eta^j}\right.\right.\\
		&~\left.\left.+\frac{1}{N}\sum_{j=1}^N\frac{\overline  M^j}{2\eta^j}-\mathbb E\left[\frac{\overline  M^i}{2\eta^j}\right] -\frac{1}{N^2}\sum_{j=1}^N\frac{\left\langle B^{j,(1)}, \mathcal P^j \right\rangle}{2\eta^j}  \right)^2\,dt  \right]\\
		&~+\left( 1+\frac{1}{\theta_3}  \right)\left( \frac{\|A\|^2}{2\theta_1\widehat\rho^2}+\frac{1}{2\theta_2} \right)\|B^{(2)}\|^2\frac{1}{N}\sum_{j=1}^N\mathbb E\left[ \int_0^T|\delta X^j_t|^2\,dt  \right]\\
		\leq&~\left(\frac{\|B^{(2)}\|^2}{\theta_0}+\frac{\|B^{(1)}\|^2}{\theta}  \right)\frac{1}{2N^2}  \mathbb E\left[\int_0^T|\mathcal P^i_t|^2\,dt  \right]\\
		&~+(1+\epsilon)^2(1+\theta_3)\left( \frac{\|A\|^2}{2\theta_1\widehat\rho^2}+\frac{1}{2\theta_2} \right)\|B^{(1)}\|^2\frac{1}{N}\sum_{j=1}^N\mathbb E\left[ \int_0^T\left( \frac{\delta  M^j}{2\eta^j}\right)^2\,dt\right]\\
		&~+(1+\epsilon)\left(1+\frac{1}{\epsilon}\right)(1+\theta_3)\left( \frac{\|A\|^2}{2\theta_1\widehat\rho^2}+\frac{1}{2\theta_2} \right)\|B^{(1)}\|^2\mathbb E\left[ \int_0^T\left(\frac{1}{N}\sum_{j=1}^N\frac{\overline  M^j}{2\eta^j}-\mathbb E\left[\frac{\overline  M^i}{2\eta^j}\right] \right)^2\right]\\
		&~+\left( 1+\frac{1}{\epsilon} \right)(1+\theta_3)\left( \frac{\|A\|^2}{2\theta_1\widehat\rho^2}+\frac{1}{2\theta_2} \right)\frac{\|B^{(1)}\|^4}{4\eta^2_{\min}N^2}\frac{1}{N}\sum_{j=1}^N\mathbb E\left[ \int_0^T|\mathcal P^j_t|^2\,dt  \right]\\
		&~+\left( 1+\frac{1}{\theta_3}  \right)\left( \frac{\|A\|^2}{2\theta_1\widehat\rho^2}+\frac{1}{2\theta_2} \right)\|B^{(2)}\|^2\frac{1}{N}\sum_{j=1}^N\mathbb E\left[ \int_0^T|\delta X^j_t|^2\,dt  \right]\\
		\leq&~(1+\epsilon)\left(\frac{\|B^{(2)}\|^2}{\theta_0}+\frac{\|B^{(1)}\|^2}{\theta}  \right)\frac{1}{2N^2\widetilde\rho^2}  \mathbb E\left[\int_0^T\left(\frac{\delta M^i_t}{2\eta^i_t}\right)^2\,dt  \right]\\
		&~+\left(1+\frac{1}{\epsilon}\right)\left(\frac{\|B^{(2)}\|^2}{\theta_0}+\frac{\|B^{(1)}\|^2}{\theta}  \right)\frac{1}{2N^2\widetilde\rho^2}  \mathbb E\left[\int_0^T\left(\frac{\overline M^i_t}{2\eta^i_t}\right)^2\,dt  \right]\\
		&~+(1+\epsilon)^2(1+\theta_3)\left( \frac{\|A\|^2}{2\theta_1\widehat\rho^2}+\frac{1}{2\theta_2} \right)\|B^{(1)}\|^2\frac{1}{N}\sum_{j=1}^N\mathbb E\left[ \int_0^T\left( \frac{\delta  M^j}{2\eta^j}\right)^2\,dt\right]\\
		&~+(1+\epsilon)\left(1+\frac{1}{\epsilon}\right)(1+\theta_3)\left( \frac{\|A\|^2}{2\theta_1\widehat\rho^2}+\frac{1}{2\theta_2} \right)\|B^{(1)}\|^2\mathbb E\left[ \int_0^T\left(\frac{1}{N}\sum_{j=1}^N\frac{\overline  M^j}{2\eta^j}-\mathbb E\left[\frac{\overline  M^i}{2\eta^j}\right] \right)^2\right]\\
		&~+(1+\epsilon)\left( 1+\frac{1}{\epsilon} \right)(1+\theta_3)\left( \frac{\|A\|^2}{2\theta_1\widehat\rho^2}+\frac{1}{2\theta_2} \right)\frac{\|B^{(1)}\|^4}{4\eta^2_{\min}N^2\widetilde\rho^2}\frac{1}{N}\sum_{j=1}^N\mathbb E\left[ \int_0^T\left(\frac{\delta M^j_t}{2\eta^j_t}\right)^2\,dt  \right]\\
		&~+\left( 1+\frac{1}{\epsilon} \right)^2(1+\theta_3)\left( \frac{\|A\|^2}{2\theta_1\widehat\rho^2}+\frac{1}{2\theta_2} \right)\frac{\|B^{(1)}\|^4}{4\eta^2_{\min}N^2\widetilde\rho^2}\frac{1}{N}\sum_{j=1}^N\mathbb E\left[ \int_0^T\left(\frac{\overline M^j_t}{2\eta^j_t}\right)^2\,dt  \right]\\
		&~+\left( 1+\frac{1}{\theta_3}  \right)\left( \frac{\|A\|^2}{2\theta_1\widehat\rho^2}+\frac{1}{2\theta_2} \right)\|B^{(2)}\|^2\frac{1}{N}\sum_{j=1}^N\mathbb E\left[ \int_0^T|\delta X^j_t|^2\,dt  \right].
\end{align*}
Letting $\theta=\frac{\|B^{(1)}\|}{N\widetilde\rho}$, $\epsilon$ be small enough, $N$ be large enough,
taking average and upper limit on both sides, we obtain by \eqref{ass-1}
\begin{align*}\label{c:delta-M}
&~\limsup_{N\rightarrow\infty}\frac{1}{N}\sum_{i=1}^N\mathbb E\left[\int^{T}_0 \left(\frac{\delta M^i_t}{2\eta^i_t}\right)^2\,dt\right]+\limsup_{N\rightarrow\infty}\frac{1}{N}\sum_{i=1}^N\mathbb E\left[\int^{T}_0 \left(\delta X^i_t\right)^2\,dt\right]\\
\leq&~ 
C\limsup_{N\rightarrow\infty}\frac{1}{N}\sum_{i=1}^N\mathbb E\left[\int^{T}_0\left(\frac{1}{N}\sum_{j=1}^N\frac{\overline  M^j_t}{2\eta^j_t}-\mathbb E\left[\frac{\overline  M^i_t}{2\eta^i_t}\right]\right)^2\,dt\right]\\
&~+\limsup_{N\rightarrow\infty} O\left( \frac{1}{N} \right)\frac{1}{N}\sum_{j=1}^N\mathbb E\left[ \int_0^T\left(\frac{\overline M^j_t}{2\eta^j_t}\right)^2\,dt  \right]\\
=&~0.
\end{align*}
Going back to the inequality for $\mathbb E\left[\int^{T}_0 \left(\frac{\delta M^i_t}{2\eta^i_t}\right)^2\,dt\right]$ and $\mathbb E\left[\int^{T}_0 \left(\delta X^i_t\right)^2\,dt\right]$, we have
\begin{equation*}
\mathbb E\left[\int^{T}_0 \left(\frac{\delta M^i_t}{2\eta^i_t}\right)^2\,dt\right]+\mathbb E\left[\int^{T}_0 \left(\delta X^i_t\right)^2\,dt\right]\xrightarrow{N\rightarrow\infty}0.
\end{equation*}
Furthermore,
\begin{equation*}
\begin{aligned}
\mathbb E\left[\sup_{0\leq t\leq T}|\delta X^i_t|^2\,dt\right]
\leq C
\mathbb E\left[\int_0^T\left(\delta M^{i}_t\right)^2\,dt\right]+\frac{C}{N}\mathbb E\left[\int_0^T\left(\delta M^i_s+\overline M^i_s\right)^2\,ds\right]
&\xrightarrow{N\rightarrow\infty}0.
\end{aligned}
\end{equation*}
\end{proof}

\section{Deterministic benchmark models}\label{sec:numerics}

In this section we consider the deterministic benchmark case where all model parameters except the initial portfolios are deterministic. This case is much easier to analyze and requires much weaker assumptions than the stochastic setting. For simplicity we also replace the strict liquidation constraint by a penalization $n(X^i_T)^2$ of open positions at the terminal time. This simplifies our numerical analysis; see Section \ref{sec:penalization}.

\subsection{The mean-field game}
If all model parameters except the initial positions are constant, then the stochastic integral terms drop out of the FBSDE system \eqref{MF-fbsde-2}. Taking expectations on both sides in \eqref{MF-fbsde-2} and putting  
\[
	\mathbb F:=(\mathbb E[X],\mathbb E[Y], \mathbb E[C])^\top \quad \mbox{and} \quad \mathbb B:=(\mathbb E[P], \mathbb E[Q], \mathbb E[R])^\top
\]	
we obtain that 
\begin{equation}\label{benchmark-ODE}
\left\{\begin{split}
\mathbb F'=&~\varphi_{00}\mathbb F+\varphi_{01}\mathbb B+F^0\\
\mathbb B'=&~\varphi_{10}\mathbb F+\varphi_{11}\mathbb B,\\
\mathbb F_0=&~\left(\begin{matrix}
\mathbb E[\mathcal X]\\
0\\
0
\end{matrix}\right)
,~\mathbb B_T=\left(\begin{matrix}
2n& 0& 0\\
0& 0& 0\\
0& 0& 0
\end{matrix}\right)\mathbb F_T ,
\end{split}\right.
\end{equation}
where
\[
\varphi_{00}=\left( \begin{matrix}
0& \frac{1}{2\eta}&0\\
-\alpha\gamma& -\rho-\frac{\gamma}{2\eta}&-\gamma(\beta-\alpha)\\
-\alpha& 0&-(\beta-\alpha)
\end{matrix} \right), \quad
\varphi_{01}=\left(\begin{matrix}
-\frac{1}{2\eta}&0& 0\\
\frac{\gamma}{2\eta}&0& 0\\
0& 0& 0
\end{matrix}\right), \quad 
F^0=\left( \begin{matrix}
0\\
\gamma\alpha \mathbb E[\mathcal X]\\
\alpha \mathbb E[\mathcal X]
\end{matrix} \right)
\]
and
\[
\varphi_{10}=\left( \begin{matrix}
-2\lambda& 0&0\\
0& \frac{1}{2\eta}&0\\
0& 0& 0
\end{matrix} \right), \quad
\varphi_{11}=\left(\begin{matrix}
0& 0&0\\
-\frac{1}{2\eta}& \rho&0\\
0&\gamma(\beta-\alpha)&(\beta-\alpha)
\end{matrix}\right).
\]
Making the ansatz $\mathbb B=D\mathbb F+D^0$ yields the following ODE system for $D$ and $D^0$: 
\begin{equation}\label{benchmark-riccati}
\left\{\begin{aligned}
D'=&~-D\varphi_{01} D-D\varphi_{00}+\varphi_{11} R+\varphi_{10},\qquad D_T=\left( \begin{matrix}
2n&0&0\\
0& 0&0\\
0& 0&0
\end{matrix}\right)\\
(D^0)'=&~(\varphi_{11}-R\varphi_{10})D^0-DF^0,\qquad\qquad D^0_T=(0,0,0)^\top.
\end{aligned}\right.
\end{equation}

Let $\Phi(T,t)=e^{\mathscr P(T-t)}$ be the fundamental solution to \eqref{benchmark-ODE}, where
\[
\mathscr P=\left(\begin{matrix}
\varphi_{00}& \varphi_{01}\\
\varphi_{10}&\varphi_{11}
\end{matrix}\right).
\]
From \eqref{benchmark-ODE} one has
\begin{equation*}
\begin{split}
0=&~(D_T,-I_{3\times3})\left( \begin{matrix}   \mathbb F_T\\ \mathbb B_T \end{matrix}\right)\\
=&~(D_T,-I_{3\times 3})\Phi(T,t)\left(\begin{matrix}
\mathbb F_t\\\mathbb B_t
\end{matrix}\right)+(D_T,-I_{3\times3})\int^T_t\Phi(T,s)\,ds\left(\begin{matrix}
F^0\\O_{3\times 1}
\end{matrix}\right)\\
=&~(D_T,-I_{3\times 3})\Phi(T,t)\left(\begin{matrix}
I_{3\times 3}\\O_{3\times 3}
\end{matrix}\right)	\mathbb F_t+(D_T,-I_{3\times 3})\Phi(T,t)\left(\begin{matrix}
O_{3\times 3}\\I_{3\times 3}
\end{matrix}\right)	\mathbb B_t\\
&~+(D_T,-I_{3\times3})\int^T_t\Phi(T,s)\,ds\left(\begin{matrix}
F^0\\O_{3\times 1}
\end{matrix}\right)
\end{split}
\end{equation*}
where $I_{3\times3}, O_{3\times3}$ and $O_{3\times1}$ are the $3\times3$ identity matrix and $3\times3, 3\times1$ zero matrices, respectively. If 
$(D_T,-I_{3\times3})\Phi(T,t)\left(\begin{matrix}
O_{3\times3}\\I_{3\times 3}
\end{matrix}\right)$ is invertible, which will be the case in the our simulations, a direct calculation shows that the unique solution to \eqref{benchmark-riccati} is given by
\begin{equation}\label{R}
D_t=-\left[  (D_T, -I_{3\times 3})\Phi(T,t)\left(\begin{matrix} O_{3\times 3}\\I_{3\times 3} \end{matrix}\right) \right]^{-1}( D_T, -I_{3\times 3} )\Phi(T,t)\left(\begin{matrix} I_{3\times3}\\O_{3\times3}  \end{matrix}\right)
\end{equation}
and
\begin{equation}\label{R0}
D^0_t=-\left[  (D_T, -I_{3\times 3})\Phi(T,t)\left(\begin{matrix} O_{3\times 3}\\I_{3\times 3} \end{matrix}\right) \right]^{-1}( D_T, -I_{3\times 3} )\int^T_t\Phi(T,s)\,ds\left(\begin{matrix}
F^0\\O_{3\times 1}
\end{matrix}\right).
\end{equation}

Having derived an explicit solution for the expected equilibrium portfolio process allows us to derive an explicit solution for the equilibrium portfolio process itself. It is not difficult to see  that 
\begin{equation*}
\left\{\begin{split}
(X_t-\mathbb E[X_t])'=&~-\frac{P_t-\mathbb E[P_t]}{2\eta}\\
-(P_t-\mathbb E[P_t])'=&~2\lambda(X_t-\mathbb E[X_t])\\
X_0-\mathbb E[X_0]=&~\mathcal X-\mathbb E[\mathcal X] \\
P_T-\mathbb E[P_T]=&~ 2n(X_T-\mathbb E[X_T])
\end{split}\right.
\end{equation*}
which is approximated by
\begin{equation*}
\left\{\begin{split}
(X_t-\mathbb E[X_t])'=&~-\frac{P_t-\mathbb E[P_t]}{2\eta}\\
-(P_t-\mathbb E[P_t])'=&~2\lambda(X_t-\mathbb E[X_t])\\
X_0-\mathbb E[X_0]=&~\mathcal X-\mathbb E[\mathcal X] \\
X_T-\mathbb E[X_T]=&~0.
\end{split}\right.
\end{equation*}
Making the ansatz $P-\mathbb E[P]=A(X-\mathbb E[X])$ yields
\[
	A'=\frac{A^2}{2\eta}-2\lambda,\quad A_T=\infty,
\]
or equivalently,
\[
A_t=2\sqrt{\eta\lambda}\coth\left( \sqrt{\frac{\lambda}{\eta}}(T-t) \right).
\]
Thus, we get that
\[
X_t-\mathbb E[X_t]=(\mathcal X-\mathbb E[\mathcal X])e^{-\int_0^t\frac{A_s}{2\eta}\,ds}=(\mathcal X-\mathbb E[\mathcal X])\frac{\sinh\left( \sqrt{\frac{\lambda}{\eta}}(T-t) \right)}{\sinh\left( \sqrt{\frac{\lambda}{\eta}}T \right)}
\]
and hence the optimal position approximately equals
\[
	X_t\approx \mathbb E[X_t]+(\mathcal X-\mathbb E[\mathcal X])\frac{\sinh\left( \sqrt{\frac{\lambda}{\eta}}(T-t) \right)}{\sinh\left( \sqrt{\frac{\lambda}{\eta}}T \right)}.
\]

\begin{figure}[h]\label{figure1}
	\begin{minipage}[c]{0.5\textwidth}
		\centering
		\includegraphics[height=5.7cm,width=7.5cm]{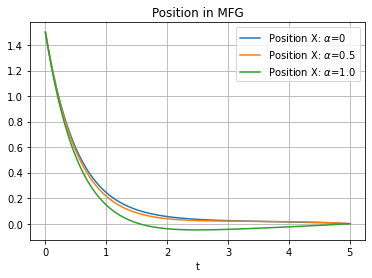}
	\end{minipage}
	\begin{minipage}[c]{0.5\textwidth}
		\centering
		\includegraphics[height=5.7cm,width=7.5cm]{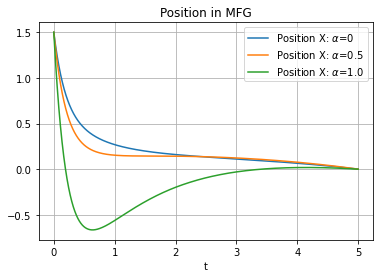}
	\end{minipage}
	\caption{Dependence of equilibrium portfolio process on the market impact parameter $\alpha$, $\gamma=0.1$(left) and $\gamma=1$(right). Other parameters are chosen as $\eta=0.1$, $\rho=0.2$, $\lambda=0.3$, $\beta=1.1$, $x=1$, $\mathbb E[\mathcal X]=1.5$ and $T=5$.}
\end{figure}

Figure \ref{figure1} displays the equilibrium portfolio processes in an MFG for varying degrees of child order flow and transient market impact. We can see from both pictures that short positions do not occur in equilibrium if the impact as measured by the quantities $\alpha$ and $\gamma$ is small. For near critical values of $\alpha$  it is optimal for the representative player to unwind his position before the terminal time, and then to take a negative position that he closes at the end of the trading period. This effect increases significantly in the impact parameter $\gamma$. The result is intuitive; the larger $\alpha$ and $\gamma$, the stronger the representative player benefits from the inertia in market order flow when closing a short position.

\subsection{Single player model}

When $N=1$ and all model parameters are deterministic constants, then our mean-field FBSDE can be rewritten as 
\begin{equation}\label{benchmark-ODE-1-player}
\left\{\begin{split}
\mathbb F'=&~ \psi_{00}\mathbb F+ \psi_{01}\mathbb B+F^0\\
\mathbb B'=&~ \psi_{10}\mathbb F+ \psi_{11}\mathbb B\\
\mathbb F_0=&~\left(\begin{matrix}
x\\
0\\
0
\end{matrix}\right)
,~\mathbb B_T=\left(\begin{matrix}
2n& 0& 0\\
0& 0& 0\\
0& 0& 0
\end{matrix}\right)\mathbb F_T ,
\end{split}\right.
\end{equation}
where 
\[
	\mathbb F=(X,Y,C) \quad \mbox{and} \quad \mathbb B=(P,Q,R)
\]
and
\[
\psi_{00}=\left( \begin{matrix}
0& \frac{1}{2\eta}&0\\
-\alpha\gamma& -\rho-\frac{\gamma}{2\eta}&-\gamma(\beta-\alpha)\\
-\alpha& 0&-(\beta-\alpha)
\end{matrix} \right), \quad
\psi_{01}=\left(\begin{matrix}
-\frac{1}{2\eta}&\frac{\gamma}{2\eta}& 0\\
\frac{\gamma}{2\eta}&-\frac{\gamma^2}{2\eta}& 0\\
0& 0& 0
\end{matrix}\right), \quad  F^0=\left( \begin{matrix}
0\\
\gamma\alpha x\\
\alpha x
\end{matrix} \right),
\]
\[
\psi_{10}=\left( \begin{matrix}
-2\lambda& 0&0\\
0& \frac{1}{2\eta}&0\\
0& 0& 0
\end{matrix} \right), \quad
\psi_{11}=\left(\begin{matrix}
0& \alpha\gamma&\alpha\\
-\frac{1}{2\eta}& \rho+\frac{\gamma}{2\eta}&0\\
0&\gamma(\beta-\alpha)&(\beta-\alpha)
\end{matrix}\right).
\]
Making again a linear ansatz $\mathbb B=\mathscr D\mathbb F+\mathscr D^0$, yields
\begin{equation}\label{benchmark-riccati-1-player}
\left\{\begin{aligned}
\mathscr D'=&~-\mathscr D\psi_{01}\mathscr D-\mathscr D\psi_{00}+\psi_{11} \mathscr D+\psi_{10},\qquad \mathscr D_T=\left( \begin{matrix}
2n&0&0\\
0& 0&0\\
0& 0&0
\end{matrix}\right)\\
(\mathscr D^0)'=&~(\psi_{11}-\mathscr D\psi_{01})\mathscr D^0-\mathscr DF^0,\qquad\qquad \mathscr D^0_T=(0,0,0)^\top
\end{aligned}\right.
\end{equation}
and the same argument as in the previous section show that the unique solution to the above ODE system is given by
\[
\mathscr D_t=-\left[  (\mathscr D_T, -I_{3\times 3})\Psi(T,t)\left(\begin{matrix} O_{3\times 3}\\I_{3\times 3} \end{matrix}\right) \right]^{-1}( \mathscr D_T, -I_{3\times 3} )\Psi(T,t)\left(\begin{matrix} I_{3\times3}\\O_{3\times3}  \end{matrix}\right)
\]
and
\[
\mathscr D^0_t=-\left[  (\mathscr D_T, -I_{3\times 3})\Psi(T,t)\left(\begin{matrix} O_{3\times 3}\\I_{3\times 3} \end{matrix}\right) \right]^{-1}( \mathscr D_T, -I_{3\times 3} )\int^T_t\Psi(T,s)\,ds\left(\begin{matrix}
F^0\\O_{3\times 1}
\end{matrix}\right)
\]
where $\Psi(T,t)=e^{\mathscr G(T-t)}$ and
\[
\mathscr G= \left(\begin{matrix}
\psi_{00}& \psi_{01}\\
\psi_{10}& \psi_{11}	
\end{matrix}\right).
\]
Note that $\int_t^T\psi(T,s)\,ds=\mathscr G^{-1}( e^{\mathscr G(T-t)}-I_{6\times 6} )$ as long as $\mathscr G$ is invertible. This is indeed the case because $\beta > \alpha$ and so 
\[
	\textrm{det}(\mathscr G)=-\rho\lambda(\beta-\alpha)^2\left( \frac{\gamma}{\eta^2}+\frac{\rho}{\eta} \right) \neq 0.
\]

In particular, the single player model with penalization can be solved explicitly. Optimal positions for various choices of model parameters are shown in Figure \ref{figure2}. The left figure shows the optimal portfolio process for various degrees of child order flow when $\gamma=1$, $\beta = 1.1$ and $\lambda = 0.3$. We see that the initial trading rate increases in $\alpha$ and that it is optimal to oversell for near-critical values of $\alpha$. The right picture shows the optimal portfolio process for different degrees of transient market impact. For very large values of $\gamma$ large fluctuations in the optimal portfolio process emerge. We emphasize that this behavior only occurs for very large values of $\gamma$.     

\begin{figure}[h]\label{figure2}
	\begin{minipage}[c]{0.5\textwidth}
		\centering
		\includegraphics[height=5.7cm,width=7.5cm]{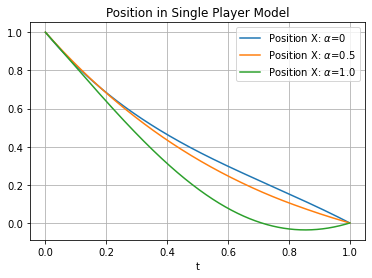}
	\end{minipage}
	\begin{minipage}[c]{0.5\textwidth}
		\centering
		\includegraphics[height=5.7cm,width=7.5cm]{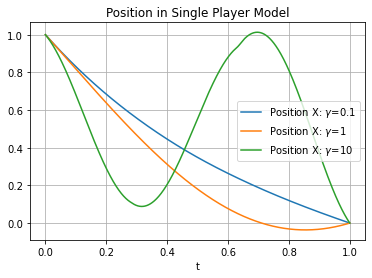}
	\end{minipage}
	\caption{Dependence of optimal portfolio process on the market impact parameters $\alpha$ and $\gamma$, $\gamma=1$(left) and $\alpha=1$(right).  Parameters are chosen as $\eta=0.1$, $\rho=0.2$, $\lambda=0.3$,  $\beta=1.1$, $x=1$ and $T=5$.}
\end{figure}

\subsection{Two player model}

If $N=2$, then our mean-field FBSDE can be rewritten as

\begin{equation}\label{benchmark-ODE-2-player}
\left\{\begin{split}
\mathbb F'=&~ \phi_{00}\mathbb F+ \phi_{01}\mathbb B+F^0\\
\mathbb B'=&~ \phi_{10}\mathbb F+ \phi_{11}\mathbb B\\
\mathbb F_0=&~\left(\begin{matrix} x^1\\0\\0\\x^2\\0\\0\end{matrix} \right),\quad    \mathbb B_T=\left(\begin{matrix}  2n&0 &0 &0&0 &0\\
0&0 & 0 & 0&0 &0\\
0&0 & 0 & 0&0 &0\\
0 & 0 & 0& 2n & 0&0\\
0&0 & 0 & 0&0 &0\\
0&0 & 0 & 0&0 &0  \end{matrix} \right)\mathbb F_T,
\end{split}\right.
\end{equation}
where 
\[
	\mathbb F=(X^{(1)},Y^{(1)},C^{(1)},X^{(2)},Y^{(2)},C^{(2)})^\top \quad \mbox{and} \quad \mathbb B=(P^{(1)},Q^{(1)},R^{(1)},P^{(2)},Q^{(2)},R^{(2)})^\top
\]	
and
\[
\phi_{00}=\left( \begin{matrix}
0& \frac{1}{2\eta_1}&0&0& 0& 0\\
-\frac{\gamma^1\alpha^1}{2}& -\rho^1-\frac{\gamma^1}{4\eta^1}&-\gamma^1(\beta^1-\alpha^1)&-\frac{\gamma^1\alpha^1}{2}& -\frac{\gamma^1}{4\eta^2}&0\\
-\frac{\alpha^1}{2}& 0&-(\beta^1-\alpha^1)&-\frac{\alpha^1}{2}& 0&0\\
0&0&0& 0& \frac{1}{2\eta_2}& 0\\
-\frac{\gamma^2\alpha^2}{2}& -\frac{\gamma^2}{4\eta^1}&0&-\frac{\gamma^2\alpha^2}{2}& -\rho^2-\frac{\gamma^2}{4\eta^2}&{-\gamma^2(\beta^2-\alpha^2)}\\
-\frac{\alpha^2}{2}& 0&0&-\frac{\alpha^2}{2}& 0&-(\beta^2-\alpha^2)
\end{matrix} \right),\]
\[ 
\phi_{01}=\left(\begin{matrix}
-\frac{1}{2\eta^1}&\frac{\gamma^1}{4\eta^1}& 0&0& 0& 0\\
\frac{\gamma^1}{4\eta^1}&-\frac{(\gamma^1)^2}{8\eta^1}& 0&\frac{\gamma^1}{4\eta^2}&-\frac{\gamma^1\gamma^2}{8\eta^2}& 0\\
0& 0& 0&0& 0& 0\\
0&0& 0&-\frac{1}{2\eta^2}&\frac{\gamma^2}{4\eta^2}&  0\\
\frac{\gamma^2}{4\eta^1}&-\frac{\gamma^1\gamma^2}{8\eta^1}& 0&\frac{\gamma^2}{4\eta^2}&-\frac{(\gamma^2)^2}{8\eta^2}& 0\\
0& 0& 0&0& 0& 0
\end{matrix}\right),
\phi_{10}=\left( \begin{matrix}
-2\lambda^1& 0&0&0& 0& 0\\
0& \frac{1}{2\eta^1}&0&0& 0& 0\\
0& 0& 0&0& 0& 0\\
0&0&0&-2\lambda^2& 0& 0\\
0& 0&0& 0&\frac{1}{2\eta^2}& 0\\
0& 0& 0&0& 0& 0
\end{matrix} \right),\]
\[ 
\phi_{11}=\left(\begin{matrix}
0& \frac{\alpha^1\gamma^1}{2}&{\frac{\alpha^1}{2}}&0& 0& 0\\
-\frac{1}{2\eta^1}& \rho^1+\frac{\gamma^1}{4\eta^1}&0&0& 0& 0\\
0&\gamma^1(\beta^1-\alpha^1)&(\beta^1-\alpha^1)&0& 0& 0\\
0& 0& 0&0& \frac{\alpha^2\gamma^2}{2}&{\frac{\alpha^2}{2}}\\
0&0& 0&-\frac{1}{2\eta^2}& \rho^2+\frac{\gamma^2}{4\eta^2}& 0\\
0&0& 0& 0&\gamma^2(\beta^2-\alpha^2)&(\beta^2-\alpha^2)
\end{matrix}\right),\]
and
\[ F^0=\left(
0,
\frac{\gamma^1\alpha^1}{2}(x^1+x^2),
\frac{\alpha^1}{2}(x^1+x^2),
0,
\frac{\gamma^2\alpha^2}{2}(x^1+x^2),
\frac{\alpha^2}{2}(x^1+x^2)
\right)^\top.
\]

Again making the ansatz $\mathbb B=\mathcal D\mathbb F+\mathcal D^0$, where
\begin{equation}\label{benchmark-riccati-1-player}
\begin{aligned}
\mathcal D'&=-\mathcal D\phi_{01}\mathcal D-\mathcal D\phi_{00}+\phi_{11} \mathcal D+\phi_{10},\qquad \mathcal D_T=\left( \begin{matrix}
2n&0 & 0 & 0& 0 & 0\\
0& 0 & 0  & 0& 0 & 0\\
0 & 0 & 0 & 0& 0 & 0\\
0 & 0  & 0& 2n& 0 & 0\\
0 & 0 & 0 & 0& 0 & 0\\
0 & 0 & 0 & 0& 0 & 0
\end{matrix}\right),\\
(\mathcal D^0)'&=(\phi_{11}-\mathcal D\phi_{01})\mathcal D^0-\mathcal DF^0,\qquad\qquad \mathcal D^0_T=\left( 
0,
0,
0,
0,
0,
0 \right)^\top
\end{aligned}
\end{equation}
the same arguments as in the mean-field case yield the unique solution 
\[
\mathcal D_t=-\left[  (\mathcal D_T, -I_{6\times6})\Phi(T,t)\left(\begin{matrix} O_{6\times6}\\I_{6\times6} \end{matrix}\right) \right]^{-1}( \mathcal D_T, -I_{6\times6} )\Phi(T,t)\left(\begin{matrix} I_{6\times6}\\O_{6\times6}  \end{matrix}\right),
\]
and
\[
\mathcal D^0_t=-\left[  (\mathcal D_T, -I_{6\times6})\Phi(T,t)\left(\begin{matrix} O_{6\times6}\\I_{6\times6} \end{matrix}\right) \right]^{-1}( \mathcal D_T, -I_{6\times 6} )\int^T_t\Phi(T,s)\,ds\left(\begin{matrix}
F^0\\O_{6\times 1}
\end{matrix}\right)
\]
where $\Phi(T,t)=e^{\mathcal G(T-t)}$ and
\[
\mathcal G= \left(\begin{matrix}
\phi_{00}& \phi_{01}\\
\phi_{10}& \phi_{11}	
\end{matrix}\right).
\]

There is no explicit expression for the integral since $\textrm{det}(\mathcal G)\equiv 0$. Figure \ref{figure3} shows equilibrium positions in a two player model with different degrees of transient market impact. In both cases, Player 2 benefits from the presence of Player 1; there is a beneficial round-trip for this player in equilibrium. As expected the round-trip is stronger (more convex) for larger degrees of transient impact.   

\begin{figure}[h]\label{figure3}
	\begin{minipage}[c]{0.5\textwidth}
		\centering
		\includegraphics[height=5.7cm,width=7.5cm]{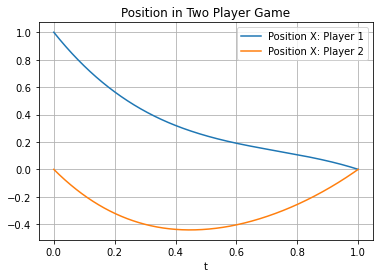}
	\end{minipage}
	\begin{minipage}[c]{0.5\textwidth}
		\centering
		\includegraphics[height=5.7cm,width=7.5cm]{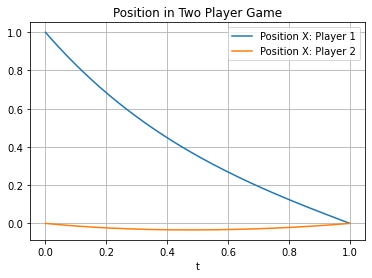}
	\end{minipage}
	\caption{Equilibrium portfolio process in the two player game under different parameters $\gamma$, $\gamma=1$(left) and $\gamma=0.1$(right). Other parameters are chosen as $\eta_1=\eta_2=0.1$, $\rho_1=\rho_2=0.2$, $\lambda_1=\lambda_2=0.3$, $\alpha_1=\alpha_2=1$,  $\beta_1=\beta_2=1.1$, $x_1=1, x_2=0$, and $T=5$.}
\end{figure}

\bibliographystyle{siam}
{
	\bibliography{bib_FHX}
}
\end{document}